\numberwithin{equation}{section}
\theoremstyle{plain}
\newtheorem{Thm}{Theorem}[section]
\newtheorem*{Thm*}{Theorem}
\newtheorem{Lem}[Thm]{Lemma}
\newtheorem{Prop}[Thm]{Proposition}
\theoremstyle{definition}
\newtheorem{Rem}[Thm]{Remark}
\newtheorem{?}[Thm]{Problem}
\newcommand{\R}{\mathbb{R}}
\newcommand{\blue}{\textcolor{blue}}
\begin{document}

\title{
Asymptotic stability of the combination of a viscous contact wave with two rarefaction waves for 1-D Navier-Stokes equations under periodic perturbations }

	\author{ Lingjun Liu\thanks{College of Mathematics, Faculty of Science, Beijing University of Technology, Beijing 100124, P.R.China. 
	E-mail: lingjunliu@bjut.edu.cn.} ,
   {Danli Wang\textsuperscript{\Letter}} \thanks{Corresponding author. 
    Academy of Mathematics and Systems Science, Chinese Academy of Sciences, Beijing 100190, P.R.China. School of Mathematical Sciences, University of Chinese Academy of Sciences, Beijing 100049, P.R.China.
	E-mail: wangdanli19@amss.ac.cn.} ,
    Lingda Xu\thanks{Department of Mathematics, Yau Mathematical Sciences Center, Tsinghua University, Beijing 100084, P.R.China. Yanqi Lake Beijing Institute of Mathematical Sciences And Applications, Beijing 101408, P.R.China.
	E-mail: xulingda@tsinghua.edu.cn.}}
	\date{}

\maketitle
\begin{abstract}
Considering the space-periodic perturbations, we prove the time-asymptotic stability of the composite wave of a viscous contact wave and two rarefaction waves for the Cauchy problem of 1-D compressible Navier-Stokes equations in this paper. This kind of perturbations keep oscillating at the far field and are not integrable. The key is to construct a suitable ansatz carrying the same oscillation 
of the solution as in \cite{HuangXuYuan2020,HuangYuan2021}, but due to the degeneration of contact discontinuity, the construction is more subtle. We find a way to use the same weight function for different variables and wave patterns, which still ensure the errors be controllable. Thus, this construction can be applied to contact discontinuity and composite waves. Finally, by the energy method, we prove that the Cauchy problem admits a unique global-in-time solution and the composite wave is still stable under the space-periodic perturbations.

\end{abstract}
\section{Introduction}

We study the one-dimensional (1-D) full compressible Navier-Stokes equations in the Lagrangian coordinates: 
\begin{equation}\label{Equ1}
\begin{cases}
v_{t}-u_{x}=0,\\
u_{t}+p_{x}={\mu}(\frac{u_{x}}{v})_{x},  \qquad  \qquad \qquad \ \ \ \ \ \ \ \ \ \ \ \ x\in\R,\  t>0, \\
E
_{t}+(pu)_{x}={\kappa}(\frac{{\theta}_{x}}{v})_{x}+{\mu}(\frac{u{u_{x}}}{v})_{x},
\end{cases}
\end{equation}
where $v(x,t)>0, u(x,t)\in\R$ and $\theta(x,t)>0$ are the specific volume, fluid velocity and absolute temperature, respectively. $\mu >0$ is the viscosity coefficient and $\kappa >0$ denotes the coefficient of heat conductivity. 
The pressure $p(x,t)$ and the 
total energy $E=E(u,\theta)$
satisfy
 $$p=\frac{R\theta}{v}=Av^{-\gamma}e^{\frac{\gamma-1}{R}s},\ E(u,\theta)=e(\theta)+{\frac{u^2}{2}}
 =\frac{R\theta}{\gamma-1}+{\frac{u^2}{2}}
 ,$$
 where $\gamma>1$ is the adiabatic exponent, $e$ is the internal energy, $s$ denotes the entropy and $A>0, R>0$ are fluid constants. 


When $\mu =\kappa =0$, the system \eqref{Equ1} is formally reduced to the following compressible Euler system
\begin{equation}\label{Equ2}
\begin{cases}
v_{t}-u_{x}=0,\\
u_{t}+p_{x}=0, \\ 
E
_{t}+(pu)_{x}=0.
\end{cases}
\end{equation}
The system 
\eqref{Equ2} is a typical strictly hyperbolic conservation laws with 
$$\lambda_1=-\sqrt{(\frac{\gamma p}{v})}<0, \ \  \lambda_2=0, \ \ \lambda_3=-\lambda_1>0.$$
It's known that the Riemann solutions of \eqref{Equ2} with the initial Riemann data 
 \begin{align}\label{Ini}
 	(v, u, \theta)(x,0)=
	\begin{cases}
	(v_-, u_-, \theta_-),\ \  x<0,\\
 (v_+, u_+, \theta_+),	\ \ x>0,
\end{cases}
\end{align}
consist of contact discontinuities, 
shock waves, 
rarefaction waves and the 
 linear superpositions of these three basic wave patterns 
(see \cite{Smoller}). And the viscous version of the corresponding Riemann solutions characterize the large-time behaviors of solutions to \eqref{Equ1} when the initial data tend to different constant states as $x\
\to \pm \infty$.



There are extensive literatures on the stability analysis of the viscous wave pattern to system \eqref{Equ1}, such as \cite{MatsumuraNishihara1985, Goodman1986}
for the shock waves, \cite{Matsumura1986,KawashimaMatsumuraNishihara1986,NishiharaYangZhao2004} for the rarefaction waves, \cite{Huang2004,Huang2006,Huang2008,LiuXin1997,Xin1996} for the viscous contact wave. 
However, the analysis of the stability of different basic wave patterns rely on their underlying properties and the framework of which are not compatible with each other, which makes it challenging to establish the stability of the superposition of several wave patterns for the Navier-Stokes equations.
The nonlinear stability of the superposition of two viscous shocks for the full compressible Navier-Stokes equations was achieved by Huang-Matsumura \cite{Huang2009}. Huang-Li-Matsumura \cite{Huang2010} proved the asymptotic stability of the combination wave of viscous contact wave with rarefaction waves for 1-D compressible Navier-Stokes equations by using the elementary energy methods  with the aid of a key inequality for the heat kernel. Then Huang-Wang \cite{Huang2016} showed the stability of this kind of combination wave under large initial perturbation for the Navier-Stokes equations without restriction on the adiabatic exponent $\gamma$. 
Recently, by utilizing the theory of $a$-contraction with shifts, Kang-Vasseur-Wang \cite{Wangyi2021} investigated the time-asymptotic stability for the composite waves
of a viscous shock and a rarefaction wave for the 1-D compressible barotropic Navier-Stokes equations
. More results on combination waves for other significant systems refer to \cite{LiWangWang2018, RuanYinZhu2017, YaoZhu2021} 
 and the references therein.  

It is interesting and important to study the stability of wave patterns under space-periodic perturbations for the hyperbolic conservation laws. Different from the perturbations in previous results, the space-periodic perturbations have infinite oscillations at the far field and are not integrable in $\mathbb{R}$. Lax and Glimm \cite{Lax1957,Glimm1970} established the global-in-time existence of periodic solutions, which asymptotically tends to their constant averages at rates $t^{-1}$. \cite{XinYuanYuan2019, XinYuanyuanIndiana, YuanYuan2019} studied the asymptotic stability of shocks and rarefaction waves under the periodic perturbations for the 1-D scalar conservation laws in both inviscid and viscous case. Recently, Huang-Xu-Yuan \cite{HuangXuYuan2020} shown the asymptotic stability of planar rarefaction waves under space-periodic perturbation for multi-dimensional Navier-Stokes equations by constructing a suitable ansatz, which makes the energy method still avaibled. Furthermore, \cite{HuangYuan2021} and \cite{YuanYuan2021} studied the stability of a single viscous shock and a composite wave of two viscous shocks under space-periodic perturbations for the 1-D Navier-Stokes equations, respectively. 

In this paper, we study the asymptotic stability of a composite  wave of viscous contact wave and the rarefaction waves under general periodic perturbations for the 1-D compressible Navier-Stokes equations \eqref{Equ1}. Motivated by \cite{HuangXuYuan2020},
we want to construct a suitable ansatz and then use the basic energy method to establish the stability result. However, the ansatz in \cite{HuangXuYuan2020}
can not be applied to the viscous contact wave in our paper since the velocity of which does not have different end states, which is crucial for the construction of the ansatz. Indeed, the aim of constructing the suitable ansatz is that the ansatz shares 
the same frequencies of oscillations as those 
of the solution at the far field, so that their initial difference is integrable which enables us to use the basic energy method.
Therefore, to overcome the difficulty comes from the viscous contact wave we mentioned above, we choose only one appropriate smooth function to construct the ansatz for $v,\ u,\ \theta$ and different wave patterns.
After this, we can use the framework as in \cite{Huang2010} to establish the stability of a composite  wave of viscous contact wave and rarefaction waves under general periodic perturbations.

This paper is organized as follows. In Section 2, some wave profiles are introduced and the main result Theorem \ref{thm1} is presented.
Some useful lemmas are given in section 3.
Section 4 is devoted to the proof of the main result Theorem \ref{thm1}.

\section{Wave profiles and main result}

At the beginning, we introduce wave profiles of contact discontinuity,
rarefaction waves and the composite wave. In the second part, we present the main result. 
\subsection{Wave profiles}
{\bf {Contact discontinuity.}}
If $(v_-,u_-,\theta_-)\in CD(v_+,u_+,\theta_+)$, which is the contact discontinuity wave curve:
$$CD(v_+,u_+,\theta_+):=\Big\{(v,u,\theta)\big|u=u_+, p=p_+, v \not\equiv v_+ \Big\},$$
then the contact discontinuity solution of \eqref{Equ2} with \eqref{Ini}
is
 \begin{align}\label{CDs}
 	(\hat V, \hat U, \hat\Theta)(x,t)=
	\begin{cases}
	(v_-, u_-, \theta_-),\ \  x<0, \ t>0,\\
 (v_+, u_+, \theta_+),	\ \ x>0 \ t>0,
\end{cases}
\end{align}
with
\begin{align}\label{equ3}
	u_-=u_+, \ \ \  p_-:=\frac{R\theta_-}{v_-}=p_+:=\frac{R\theta_+}{v_+}.
\end{align}

Here we consider the viscous contact wave for the corresponding compressible Navier-Stokes equations \eqref{Equ1}.
We first construct the viscous contact wave $(V, U, \Theta)$ for the Navier-Stokes equations \eqref{Equ1}, which is introduced in \cite{Huang2008} and \cite{Huang2010}. 
We start from the following nonlinear diffusion equation,
\begin{align}\label{equ5}
\begin{cases}
\Theta_t=a(\frac{\Theta_x}{\Theta})_x, \ \ a=\frac{\kappa p_+(\gamma-1)}{\gamma R^2}>0,\\
\Theta(\pm\infty, t)=\theta_{\pm}.	
	\end{cases}
\end{align}
It is easy to know that $\Theta(x,t)=\Theta(\xi), \xi=\frac{x}{\sqrt{1+t}}$ is a unique self-similar solution to \eqref{equ5} (see \cite{Hsiao1993, Van1976}). And $\Theta(\xi)$ is a monotone function, increasing (or decreasing) if $\theta_+>\theta_-$ (or $\theta_+<\theta_-$). Furthermore, $\exists \bar\delta>0$, 
such that for $\delta=|\theta_+-\theta_-|\le\bar\delta$,  there holds
\begin{align}\label{equ6}
\begin{aligned}
(1+t)^{\frac{3}{2}}|\Theta_{xxx}|+&(1+t)|\Theta_{xx}|+(1+t)^{\frac{1}{2}}|\Theta_x|+|\Theta-\theta_\pm|\\
&\le C_1\delta e^{-\frac{C_2x^2}{1+t}}\ \  \  \text{as}\ \ \ x\rightarrow\pm\infty,
\end{aligned}
\end{align}
where positive constants $C_1$ and $C_2$ only depend on $\theta_-$ and $\bar\delta$. Then the contact wave proflie $(V, U, \Theta)(x,t)$ is
\begin{align}\label{equ7}
V=\frac{R}{p_+}\Theta, \ \ U=u_-+\frac{\kappa(\gamma-1)}{\gamma R}\frac{\Theta_x}{\Theta}, \ \ \Theta=\Theta.
\end{align}
By direct calculation, we find that
\begin{align}\label{ap}
\|V-\hat V, U-\hat U, \Theta-\hat\Theta\|_{L^p}=O(\kappa^{\frac{1}{2p}})(1+t)^{\frac{1}{2p}} \ \ \text{as}\ \ \kappa\rightarrow 0.
\end{align}
The contact wave $(V, U, \Theta)$ constructed in \eqref{equ7} satisfies the system
\begin{equation}\label{equ8}
\begin{cases}
V_{t}-U_{x}=0,\\
U_{t}+(\frac{R\Theta}{V})_{x}=\mu(\frac{U_x}{V})_x+Q_1, \\ 
E_{t}(V,\Theta)
+(p(V,\Theta)U)_{x}=(\kappa\frac{\Theta_x}{V}+\mu\frac{UU_x}{V})_x+Q_2,
\end{cases}
\end{equation}
where as $|x|\rightarrow\infty,$
\begin{align}\label{Q1}
\qquad\ \ \  Q_1&=\frac{\kappa(\gamma-1)}{\gamma R}\Big((\ln\Theta)_{xt}-\mu\big(\frac{p_+}{R\Theta}(\ln\Theta)_{xx}\big)_x\Big)=O(\delta)(1+t)^{-\frac{3}{2}}e^{-\frac{C_2x^2}{1+t}},\\\label{Q2}
Q_2&=\Big(\frac{\kappa(\gamma-1)}{\gamma R}\Big)^2\Big((\ln\Theta)_{x}(\ln\Theta)_{xt}-\mu\big(\frac{p_+}{R\Theta}(\ln\Theta)_{x}(\ln\Theta)_{xx}\big)_x\Big)\\
&=O(\delta)(1+t)^{-{2}}e^{-\frac{C_2x^2}{1+t}}.\nonumber
\end{align}

{\bf{ Rarefaction waves.}} If $(v_-,u_-,\theta_-)\in R_i(v_+,u_+,\theta_+)$ $(i=1,3)$,
$$R_i(v_+,u_+,\theta_+):=\Big\{(v,u,\theta)\big|v<v_+,u=u_+-\int_{v_+}^v\lambda_i(\eta,s_+)d\eta,s(v,\theta)=s_+\Big\},$$
with $$s=\frac{R}{\gamma-1}\ln{\frac{R\theta}{A}}+R\ln v,\ \ s_\pm=\frac{R}{\gamma-1}\ln{\frac{R\theta_\pm}{A}}+R\ln v_\pm,$$
then there exists an i-rarefaction wave $(v^{R_i}, u^{R_i}, \theta^{R_i})(\frac{x}{t})$ which   
is the weak solution of the Euler system \eqref{Equ2} with the initial Riemann data
 \begin{align}\label{IR}
 	(v,u,\theta)(x,0)=
	\begin{cases}
	(v_-,u_-,\theta_-),
	\ \  
	x<0,\\
 (v_+,u_+,\theta_+),
 \ \ 
 x>0. 
\end{cases}
\end{align}
Since the rarefaction wave is only Lipschitz continuous, motivated by \cite{Matsumura1986}, we use the smooth solutions $(V^{R_i}, U^{R_i}, \Theta^{R_i})(i=1,3)$ of \eqref{Equ2} to approximate the rarefaction waves $(v^{R_i}, u^{R_i}, \theta^{R_i})(\frac{x}{t})$. 
 And $(V^{R_i}, U^{R_i}, \Theta^{R_i})$ are given by the following form
 \begin{equation}\label{equ9}
	\begin{cases}
	S^{R_i}(x,t)=s\big(V^{R_i}(x,t), \Theta^{R_i}(x,t)\big)=s_+, \\
	w_\pm=\lambda_{i\pm}:=\lambda_i(v_\pm,\theta_\pm),\\
	\lambda_i(V^{R_i}(x,t),s_+)=w(x,t),\\
	U^{R_i}=u_+-\int_{v_+}^{V^{R_i}(x,t)}\lambda_i(\eta,s_+)d\eta,\\
\end{cases}
\end{equation}
where $w$ 
is the solution of inviscid Burgers equation 
 \begin{align}\label{equ10}
	\begin{cases}
	w_t+ww_x=0,\\
	w(x,0)=\frac{1}{2}(w_++w_-)+(w_+-w_-)\frac{\tanh x}{2}.
\end{cases}
\end{align}

{\bf{  Superposition of rarefaction waves and viscous contact wave.}}
Similar to \cite{Huang2010}, we assume that
$$(v_-,u_-,\theta_-)\in \Omega_{R_1-CD-R_3}(v_+,u_+,\theta_+).$$
From \cite{Smoller},
$\exists \delta_1>0$ suitably small such that for
\begin{align}\label{delta1}
(v_-, u_-, \theta_-)\in \Omega_{R_1-CD-R_3}(v_+,u_+,\theta_+), \ \ |\theta_--\theta_+|\le \delta_1,
\end{align}
there exists uniquely a pair of points $(v_-^o, u^o, \theta_-^o)\ \text{and}\ (v_+^o, u^o, \theta_+^o)
$ such that
$$(v_-,u_-,\theta_-)\in R_1(v^o_-,u_-^o,\theta_-^o), \ (v^o_-,u_-^o,\theta_-^o) \in CD(v^o_+,u_+^o,\theta_+^o),$$ $$(v^o_+,u_+^o,\theta_+^o)\in R_3(v_+,u_+,\theta_+),$$
and
\begin{align}\label{21}
|v^o_\pm-v_\pm|
+|u^o-u_\pm|+|\theta^o_\pm-\theta_\pm|\le C_3|\theta_--\theta_+|,
\end{align}
for some positive constant $C_3$. 

Thus the Riemann solution $(\bar V, \bar U, \bar \Theta)(t,x)$ of the Euler system \eqref{Equ2}, consisting of two rarefaction waves and a contact discontinuity, can be defined by
\begin{equation}\label{riemann}
\left(
	\begin{array}{c}
	\bar V\\
	\bar U\\
	\bar \Theta
	\end{array}
\right)(x,t)=\left(
	\begin{array}{cc}
	v^{R_1}+\hat V
	+v^{R_3}\\
	u^{R_1}+\hat U
	+u^{R_3}\\
	\theta^{R_1}+\hat \Theta
	+\theta^{R_3}
	\end{array}
\right)(x,t)-\left(
	\begin{array}{ccc}
	v_-^{o}+v_+^o\\
	u^o+u^o\\
	\theta_-^o+\theta_+^o
	\end{array}
\right)(x,t).
\end{equation}
Corresponding to \eqref{riemann}, we now can define the approximate wave pattern. For convenience, we denote $(v^{cd},u^{cd}, \theta^{cd})(x,t)$ as the viscous contact wave from \eqref{equ5} and \eqref{equ7} with the constant states $(v_\pm, u_\pm, \theta_\pm)$ replaced by $(v^o_\pm, 0, \theta_\pm^o)$, and rewrite $$(v_{r_1},u_{r_1}, \theta_{r_1}):=(V^{R_1}, U^{R_1}, \Theta^{R_1}), \ \  (v_{r_3},u_{r_3}, \theta_{r_3}):=(V^{R_3}, U^{R_3}, \Theta^{R_3}).$$ 
Then we set
\begin{equation}\label{equ11}
\left(
	\begin{array}{c}
	\widetilde V\\
	\widetilde U\\
	\widetilde \Theta
	\end{array}
\right)(x,t)=\left(
	\begin{array}{cc}
v_{r_1}+v^{cd}+v_{r_3}\\
u_{r_1}+u^{cd}+u_{r_3}\\
\theta_{r_1}+\theta^{cd}+\theta_{r_3}
	\end{array}
\right)(x,t)-\left(
	\begin{array}{ccc}
	v_-^{o}+v_+^o\\
	u^o\\
	\theta_-^o+\theta_+^o
	\end{array}
\right)(x,t).
\end{equation}

\subsection{Main result}
In this paper, we consider the stability of composite wave of a viscous contact wave and two rarefaction waves for 1-D compressible Navier-Stokes equations \eqref{Equ1} under periodic perturbation, i.e., the periodic initial data satisfies
 \begin{align}\label{pIB}
	(v_0, u_0, E
	_0)(x)=(\widetilde V,\widetilde U,\widetilde E
	)(x,0)+(\phi_{1}, \phi_{2}, \phi_{3})(x),
\end{align}
where $E_0=e(\theta_0)+{\frac{u_0^2}{2}}=\frac{R}{\gamma-1}\theta_0+\frac{1}{2}u_0^2$, 
$\widetilde E=\frac{R}{\gamma-1}\widetilde\Theta+\frac{1}{2}\widetilde U^2\ $
and $\ \phi_{ j}\in H^3(0,\pi^*)(j=1,2,3)$ are periodic perturbations with period $\pi^*>0 $ satisfying
\begin{align}\label{zero}
	\int_0^{\pi^*}(\phi_{1}, \phi_{2}, \phi_{3})(x)dx=0.
\end{align}
We set $\pi^*=1$ for simplicity.

\begin{Rem}\label{rem1}
The periodic perturbation should be imposed on $(\widetilde V,\widetilde U,\widetilde E)(x,0)$, which are conservative quantities. This is crucial for the exponential decay rates of the periodic solutions, which are shown in Lemma \ref{lemm1} below.
\end{Rem}


Then we rewrite the initial data $(v_0, u_0, \theta_0)(x)$ 
as follows:
\begin{align}\label{eqin}
\begin{aligned}
&(v_0,u_0,\theta_0)(x)=\Big(\widetilde V(x,0)+\phi_1(x),\widetilde U(x,0)+\phi_2(x), \widetilde\Theta(x,0)+\phi_4(x)\Big), 
\end{aligned}
\end{align}
 where $$\phi_4(x)=\frac{\gamma-1}{2R}\big[\widetilde U^2(x,0)-\big(\widetilde U(x,0)+\phi_2(x)\big)^2\big]+\frac{\gamma-1}{R}\phi_3(x).$$

Let $(\bar v_\pm, \bar u_\pm, \bar 
\theta
_\pm)$ and $(\bar v^o_\pm, \bar u_\pm^o,
 \bar 
 \theta
 ^o_\pm)$ denote the periodic solutions of the Navier-Stokes equations \eqref{Equ1} with the periodic initial datas $(\bar v_\pm, \bar u_\pm, \bar E
 _\pm)(x,0)$ and $(\bar v^o_\pm, \bar u_\pm^o, \bar E
 ^o_\pm)(x,0),$ respectively 
  (see \cite{YuanYuan2021} for the existence): 
 \begin{align}\label{p1}
 \begin{aligned}
 (\phi_1,\phi_2,\phi_3
 )(x)&=(\bar v_\pm,\bar u_\pm, \bar E
 _\pm)(x,0)-(v_\pm, u_\pm, E
 _\pm)\\
 &=(\bar v^o_\pm, \bar u_\pm^o, \bar E
 ^o_\pm )(x,0)-(v^o_\pm, u^o, E
 ^o_\pm),
 \end{aligned}
 \end{align}
 and denote
\begin{align}\label{equ21}
\begin{array}{c}
(\widetilde v_{\pm}, \widetilde u_\pm, \widetilde  E
_\pm)(x,t)=(\bar v_\pm-v_\pm, \bar u_\pm-u_\pm, \bar E
_\pm-E
_\pm)(x,t), \\
(\widetilde v^o_{\pm}, \widetilde u^o_\pm, \widetilde E
^o_\pm)(x,t)=(\bar v^o_\pm-v^o_\pm, \bar u_\pm^o
-u^o, \bar E
^o_\pm-E
^o_\pm)(x,t),
\end{array}
\end{align}
where
\begin{align}
&E_\pm=E(u_\pm,\theta_\pm),\ E^o_\pm=E(u_\pm^o,\theta_\pm^o),\\
&\bar E_\pm=E(\bar u_\pm,\bar\theta_\pm),\ \bar E^o_\pm=E(\bar u_\pm^o,\bar\theta^o_\pm).
\nonumber
\end{align}

Set
\begin{equation}\label{equ14}
\eta=\frac{v^{cd}-v^o_-}{v^o_+-v^o_-}.
\end{equation}
Now we are ready to construct the ansatz. 
  We define

\begin{equation}\label{equ15}
\begin{array}{l}
(\bar v_{r_1}, \bar u_{r_1}, \bar\theta_{r_1})\\
\ =\Big((1-\eta)\widetilde v_-+\eta\widetilde v^o_- +v_{r_1},(1-\eta)\widetilde u_-+\eta\widetilde u^o_- +u_{r_1},(1-\eta)\widetilde \theta_-+\eta\widetilde \theta^o_- +\theta_{r_1} \Big),\\
	(\bar v_{r_3}, \bar u_{r_3}, \bar\theta_{r_3})\\
	\ =\Big((1-\eta)\widetilde v_+^o+\eta\widetilde v_+ +v_{r_3},  (1-\eta)\widetilde u_+^o+\eta\widetilde u_+ +u_{r_3},  (1-\eta)\widetilde \theta_+^o+\eta\widetilde \theta_+ +\theta_{r_3} \Big) ,\\
	(\bar v^{cd}, \bar u^{cd}, \bar\theta^{cd})\\
	\quad=\Big((1-\eta)\widetilde v^o_-+\eta\widetilde v^o_+ +v^{cd}, u^{cd}+(1-\eta)\widetilde u^o_-+\eta\widetilde u^o_+ +u^o,(1-\eta)\widetilde \theta^o_-+\eta\widetilde \theta^o_+ +\theta^{cd} \Big),
\end{array}
\end{equation}
where
$\widetilde \theta_{\pm}=\bar \theta_\pm-\theta_\pm,\ \widetilde \theta^o_{\pm}=\bar \theta^o_\pm-\theta^o_\pm$.
Then the ansatz $(\bar v, \bar u, \bar\theta)$ is
\begin{equation}\label{equ16}
\left(
	\begin{array}{c}
	\bar v\\
	\bar u\\
	\bar\theta
	\end{array}
\right)(x,t)=\left(
	\begin{array}{cc}
	\bar v^{cd}+\bar v_{r_1}+\bar v_{r_3}\\
	\bar u^{cd}+\bar u_{r_1}+\bar u_{r_3}\\
	\bar \theta^{cd}+\bar\theta_{r_1}+\bar\theta_{r_3}
	\end{array}
\right)(x,t)-\left(
	\begin{array}{ccc}
	\bar v_-^{o}+\bar v_+^o\\
	\bar u_-^o+\bar u_+^o\\
	\bar\theta_-^o+\bar\theta_+^o
	\end{array}
\right)(x,t).
\end{equation}

Set
\begin{align}\label{19}
\begin{aligned}
	&(\phi, \psi,
	\zeta)(x,t)=(v-\bar v, u-\bar u, 
	 \theta-\bar\theta)(x,t),\\
	  &(\phi,\psi,
	   \zeta)(x,
	0):=(\phi_0,\psi_0,
	\zeta_0)(x),\ \ w(x,0):=w_0=E_0-\bar E_0,
\end{aligned}
\end{align}
where $\bar E_0
=\frac{R}{\gamma-1}\bar\theta_0+\frac{1}{2}\bar u_0^2$ ,
and $(v, u,
 \theta)$ is the solution of Navier-Stokes equations \eqref{Equ1}. For any $T>0$, 
we define 
\begin{align*}
X(I)\subset C\big(I;H^1(\mathbb{R})\big),\ \text{where}\   I:=[0,T]\subset[0,+\infty),
\end{align*}
where
\begin{align*}
X(I)=\Big\{(\phi,\psi,\zeta)\in C\big(I;H^1(\mathbb{R})\big)\Big| \phi_x\in L^2(I;L^2\big(\mathbb{R})\big), (\psi_x,\zeta_x)\in L^2\big(I; H^1(\mathbb{R})\big)
\Big\}.
\end{align*}
 In this paper, we denote $C$ as the positive generic constant and the norm of the $l-$th order Sobolev space 
 $H^l(\mathbb{R})$ as $$\|f\|_l=\big(\sum_{j=0}^{l}\|\partial_x^jf\|^2\big)^{\frac{1}{2}},\ \text{where}\  \|\cdot\|:=\|\cdot\|_{L^2(\mathbb{R})}.$$

 Now we 
 state the main result about the compressible Navier-Stokes equations \eqref{Equ1}:
\begin{Thm}\label{thm1}
Assume that the periodic perturbations $(\phi_1,\phi_2,\phi_3) \in H^3((0,1))$ satisfy \eqref{zero}.
 Given a Riemann solution $(\bar V, \bar U, \bar\Theta)$ defined in \eqref{riemann}, there exist positive constants $\delta_0\big(\le\min\big\{\bar\delta, \delta_1,1\big\}\big)$ and $\epsilon_0>0$, 
  such that if $$|\theta_--\theta_+|=\delta\le\delta_0,\ \ 
 \|(\phi_{1}, \phi_{2}, \phi_{3})(x,0)\|_{H^3((0,1))}\le \epsilon_0,
 $$ then the compressible Navier-Stokes equations \eqref{Equ1} with \eqref{pIB} admits a unique global solution satisfying that
\begin{align}\label{eq020}
\begin{aligned}
(v-\bar v, u-\bar u,\theta-\bar \theta)\in  X(I).
\end{aligned}
\end{align}
Moreover, it holds that
\begin{equation}\label{eq030}
\lim_{t\rightarrow\infty}\sup_{x\in\mathbb{R}}|(v,u,\theta)(x,t)-(\bar V,\bar U,\bar \Theta)(x,t)|=0.
\end{equation}
\end{Thm}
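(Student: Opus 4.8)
The plan is to reformulate the problem as a perturbation system for $(\phi,\psi,\zeta)=(v-\bar v,u-\bar u,\theta-\bar\theta)$ and close it by a continuity argument resting on uniform a priori energy estimates. First I would subtract the equations satisfied by the ansatz \eqref{equ16} from \eqref{Equ1} to obtain a system of the schematic form
\begin{equation*}
\begin{cases}
\phi_t-\psi_x=0,\\
\psi_t+(p-\bar p)_x=\mu\Big(\frac{u_x}{v}-\frac{\bar u_x}{\bar v}\Big)_x+F_1,\\
\frac{R}{\gamma-1}\zeta_t+p u_x-\bar p\,\bar u_x=\kappa\Big(\frac{\theta_x}{v}-\frac{\bar\theta_x}{\bar v}\Big)_x+\mu\Big(\frac{u_x^2}{v}-\frac{\bar u_x^2}{\bar v}\Big)+F_2,
\end{cases}
\end{equation*}
where the sources $F_1,F_2$ collect (i) the intrinsic contact-wave errors $Q_1,Q_2$ from \eqref{Q1} and \eqref{Q2}, (ii) the smoothing and wave-interaction errors among $v_{r_1},v^{cd},v_{r_3}$ and their velocity/temperature counterparts, and (iii) the matching errors generated by coupling the ansatz to the periodic solutions through the single weight $\eta$ of \eqref{equ14}. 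The structural payoff of the construction \eqref{equ15}--\eqref{equ16} is that the periodic differences $\widetilde v_\pm,\widetilde u_\pm,\widetilde\theta_\pm$ and their $o$-state analogues decay exponentially in time by \cref{lemm1}, so every source term carrying $\eta_t,\eta_x$ against these differences is integrable and time-decaying, and the initial difference $(\phi_0,\psi_0,\zeta_0)$ lies in $H^1(\mathbb{R})$. Local existence of a solution in $X([0,T])$ is standard, so the entire difficulty lies in the uniform a priori bound.

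The core is therefore the a priori estimate: under the smallness assumptions $\delta\le\delta_0$, $\|(\phi_1,\phi_2,\phi_3)\|_{H^3}\le\epsilon_0$ and the induction hypothesis $N(T):=\sup_{[0,T]}\|(\phi,\psi,\zeta)\|_{H^1}\le\nu$, I would prove an estimate of the type $\|(\phi,\psi,\zeta)\|_{H^1}^2+\int_0^T(\|\phi_x\|^2+\|\psi_x\|_1^2+\|\zeta_x\|_1^2)\,dt\le C(\epsilon_0^2+\delta^{1/4})$, following the relative-entropy framework of \cite{Huang2010}. Concretely, I would multiply the system by the factors associated with the quadratic entropy $R\bar\theta\,\Phi(v/\bar v)+\tfrac12\psi^2+\tfrac{R}{\gamma-1}\bar\theta\,\Phi(\theta/\bar\theta)$, integrate, and separate the three families of dangerous terms. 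The expansive rarefaction profiles contribute a favorably signed term via their monotonicity, the periodic contributions are controlled by the exponential bound $|\widetilde v_\pm|+|\widetilde u_\pm|+|\widetilde\theta_\pm|\le C\epsilon_0 e^{-\alpha t}$ of \cref{lemm1}, and the contact-wave coefficient terms are the delicate ones.

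The contact wave forces the key technical device: terms such as $\int_{\mathbb{R}}|\Theta_x|\,\phi^2\,dx$ cannot be absorbed into the dissipation $\int\phi_x^2$ by a Poincaré inequality, which fails on $\mathbb{R}$. Instead I would invoke the heat-kernel inequality of \cite{Huang2010},
\[
\int_{\mathbb{R}}(1+t)^{-1/2}e^{-cx^2/(1+t)}\,g^2\,dx\le C\|g\|\,\|g_x\|+C(1+t)^{-1/2}\|g\|^2,
\]
which trades a controlled loss of decay against the parabolic dissipation. The first-order estimate is then obtained by differentiating the system once in $x$, multiplying by $(\phi_x,\psi_x,\zeta_x)$ and integrating: the velocity and temperature equations supply the dissipation $\int(\psi_{xx}^2+\zeta_{xx}^2)$, while $\phi_x$ is recovered from the momentum equation through the term $\mu(\psi_x/v)_x$. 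Summing the zeroth- and first-order estimates and choosing $\delta_0,\epsilon_0,\nu$ small closes the bound, and the continuity argument yields the global solution satisfying \eqref{eq020}.

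The main obstacle is precisely the degeneration of the contact discontinuity stressed in the introduction: its velocity has equal end states, so the jump-adapted weight functions of \cite{HuangXuYuan2020,HuangYuan2021} are unavailable. My construction circumvents this by using the single smooth weight $\eta$ coming from the contact profile simultaneously for $v,u,\theta$ and for all three wave families; the hard part is verifying that this one weight still renders all interaction and matching errors controllable, in particular time-integrable when paired against the Gaussian heat-kernel weight, rather than requiring separate weights tied to nonzero far-field jumps. Once the global $H^1$ bound and the dissipation integrals are established, the standard argument that $\int_0^\infty(\|\psi_x\|^2+\|\zeta_x\|^2+\|\phi_x\|^2)\,dt<\infty$ together with the boundedness of the corresponding time derivatives forces $\|(\phi_x,\psi_x,\zeta_x)\|\to0$, hence $\|(\phi,\psi,\zeta)\|_{L^\infty}\to0$. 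Combining this with the facts that the periodic corrections in the ansatz decay and that the smooth rarefaction approximations converge to the exact rarefaction waves, so that $(\bar v,\bar u,\bar\theta)$ tends to the Riemann solution $(\bar V,\bar U,\bar\Theta)$, yields \eqref{eq030}.
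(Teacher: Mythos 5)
Your proposal follows essentially the same route as the paper: reformulate in the perturbation variables $(\phi,\psi,\zeta)$, run the relative-entropy energy estimates of \cite{Huang2010} using the monotonicity of the rarefaction profiles and the heat-kernel inequality of \cref{lemm9} to absorb the contact-wave coefficient terms, combine with the exponential decay of the periodic differences from \cref{lemm1}, and close by a continuity argument before passing to the large-time limit. The only small inaccuracies are that the mass equation also carries a nonzero source ($\phi_t-\psi_x=-F$ in \eqref{equ20}, with $F$ as in \eqref{equ180}, which must be tracked in the basic estimate) and that the paper's key heat-kernel lemma is the space--time integrated version involving the pairing $\langle h_t,hg^2\rangle$ rather than the pointwise-in-time interpolation you quote; neither affects the overall strategy.
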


\begin{Rem}\label{rem2}
Motivated by \cite{HuangXuYuan2020}, we try to construct a suitable ansatz $(\bar v, \bar u, \bar\theta)$ to eliminate the oscillations at the far field, induced by the periodic perturbations. However, the ansatz introduced in \cite{HuangXuYuan2020} can not be used for viscous contact wave in our case. In fact, we can not construct a smooth function $\frac{u^{cd}-u^{o}_-}{u^{o}_+ - u^{o}_-}$ like that in \cite{HuangXuYuan2020} since for viscous contact wave, $u^{o}_+ = u^{o}_- $. We observe that, only one smooth function $\eta=\frac{v^{cd}-v^o_-}{v^o_+-v^o_-}$ is needed to construct the ansatz for $v,\ u,\ \theta$ and different wave patterns (see \eqref{equ16}). It turns out that our ansatz still carry the same frequencies of oscillations of the solution so that the initial perturbations $(v-\bar v, u-\bar u,\theta-\bar \theta)(x,0)$ become integrable. Additionally, we use the smooth function $\eta=\frac{v^{cd}-v^o_-}{v^o_+-v^o_-}$ just for clarity, it can also be replaced by another similar functions such as $\frac{\theta_{r_1}-\theta_-}{\theta^o_--\theta_-}$ and so on.
\end{Rem}

\section{Preliminaries}
In this section, we first show
some useful properties of  viscous contact wave and rarefaction waves.
Then we 
 present the properties of periodic solutions to \eqref{Equ1}.

Let $\mathbb{R}\times (0,t)=\Omega_-\cup\Omega_c\cup\Omega_+$, where
\begin{equation}
\begin{aligned}
&\Omega_+=\{(x,t)|2x > \lambda_3(v_+^o, s_+)t\},\\
&\Omega_-=\{(x,t)|-2x > - \lambda_1(v_-^o, s_-)t\},\\
&\Omega_c=\{(x,t)|\lambda_1
(v^o_-, s_-)t\le2x\le\lambda_3
(v^o_+, s_+)t\}.\nonumber
\end{aligned}
\end{equation}
There are some basic properties for the viscous contact wave, rarefaction waves as follows.

\begin{Lem}\label{lemm2}(\cite{Huang2010})
Assume that $(v_-, u_-, \theta_-)$ satisfies \eqref{delta1} with $\delta:=|\theta_--\theta_+|\le\bar\delta$ for any given $(v_+, u_+,\theta_+)$, then the smooth i-rarefaction waves $(v_{r_i},u_{r_i},\theta_{r_i})\ (i=1,3)
$ constructed in \eqref{equ9} and the viscous contact discontinuity wave $(v^{cd}, u^{cd}, \theta^{cd})$ satisfy:\\

(i) $(u_{r_i})_x\ge 0\ (x\in\mathbb{R}, t>0)$.

(ii) For $1\le p\le\infty,$ $t\ge0$ and $\delta=|\theta_--\theta_+|$, there exists a positive constant $C=C(p,v_-,u_-,\theta_-,\bar\delta, \delta_1)$ such that
\begin{align}\label{eq22}
\|\big((v_{r_i})_x, (u_{r_i})_x, (\theta_{r_i})_x\big)(t)\|_{L^p}
\le C\min\big\{\delta,\delta^{\frac{1}{p}}t^{-1+\frac{1}{p}}\big\},
\end{align}
and
\begin{align}\label{eq23}
\sum_{l=2,3}\|\partial_x^l(v_{r_i}, u_{r_i},\theta_{r_i})(t)\|_{L^p}
\le C\min\big\{\delta,t^{-1}\big\}.
\end{align}

(iii) There exists some positive constant $C=C(v_-,u_-,\theta_-,\bar\delta,\delta_1)$ such that
\begin{align}\label{eq24}
\begin{aligned}
(u_{r_i})_x&+|(v_{r_i})_x|+|v_{r_i}-v^o_\pm|+|(\theta_{r_i})_x|+|\theta_{r_i}-\theta_{r_i}^o|\\
&+|(u_{r_i})_{xx}|+|(v_{r_i})_{xx}|+|(\theta_{r_i})_{xx}|\le C\delta e^{-c_0(|x|+t)},
\end{aligned}
\end{align}
in $\Omega_c$, and
\begin{align}\label{25}
\begin{cases}
&|v^{cd}-v^o_\mp|+|(v^{cd})_x|+|\theta^{cd}-\theta_\mp^o|+|(\theta^{cd})_x|+|(u^{cd})_{x}|\le C\delta e^{-c_0(|x|+t)}, \\
&(u_{r_i})_x+|(v_{r_i})_x|+|v_{r_i}-v^o_\pm|+|(\theta_{r_i})_x|+|\theta_{r_i}-\theta_\pm^o|\\
&\qquad+|(u_{r_i})_{xx}|+|(v_{r_i})_{xx}|+|(\theta_{r_i})_{xx}|\le C\delta e^{-c_0(|x|+t)},
\end{cases}
\end{align}
in $\Omega_{\mp}$,
where $$c_0=\frac{1}{10}\min\big\{|\lambda_1(v_-^o, s_-)|, \lambda_3(v_+^o, s_+), C_2\lambda^2_1(v_-^o, s_-), C_2\lambda^2_3(v_+^o, s_+), 1\big\}.$$

(iv) For the rarefaction waves $(v^{R_i}, u^{R_i}, \theta^{R_i})(\frac{x}{t})$ determined by \eqref{Equ2} and \eqref{IR}, there holds
\begin{align}
\lim_{t\rightarrow+\infty}\sup_{x\in\mathbb{R}}|(v_{r_i},u_{r_i},\theta_{r_i})(x,t)-(v^{R_i}, u^{R_i}, \theta^{R_i})(\frac{x}{t})|=0.
\end{align}
\end{Lem}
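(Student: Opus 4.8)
The plan is to treat \cref{lemm2} as two essentially decoupled constructions --- the smooth approximate rarefaction waves built from a Burgers profile and the viscous contact wave built from the self-similar solution of \eqref{equ5} --- to establish the pointwise and $L^p$ bounds for each separately, and then to patch them together across the three regions $\Omega_-,\Omega_c,\Omega_+$ using the separation of the wave speeds. All of (i)--(iii) are consequences of the explicit formulas \eqref{equ7}, \eqref{equ9}, \eqref{equ10} together with the self-similar estimate \eqref{equ6}, while (iv) is a soft convergence statement inherited from the corresponding property of the Burgers approximation.

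For the rarefaction part I would start from the inviscid Burgers equation \eqref{equ10}, whose solution $w(x,t)$ with the monotone datum $\tfrac12(w_++w_-)+\tfrac12(w_+-w_-)\tanh x$ is smooth, satisfies $w_x\ge 0$, and obeys the classical Matsumura--Nishihara bounds $\norm{w_x(t)}_{L^p}\le C\min\{\delta,\delta^{1/p}t^{-1+1/p}\}$ and $\norm{\partial_x^l w(t)}_{L^p}\le C\min\{\delta,t^{-1}\}$ for $l=2,3$ (see \cite{Matsumura1986}). Since \eqref{equ9} expresses $(v_{r_i},u_{r_i},\theta_{r_i})$ as smooth functions of $w$ with non-vanishing derivatives on the relevant range (through $\lambda_i(\cdot,s_+)$ and $U^{R_i}=u_+-\int\lambda_i$), the chain rule transfers these estimates and yields (i) from $w_x\ge0$ together with $\partial_v\lambda_i\neq0$, giving $(u_{r_i})_x\ge0$, as well as the rarefaction halves of (ii). For the contact wave, \eqref{equ7} writes $V,U$ as smooth functions of $\Theta$ and $\Theta_x/\Theta$, so every derivative of $(v^{cd},u^{cd},\theta^{cd})$ is a combination of derivatives of $\Theta$; feeding in the Gaussian bound \eqref{equ6} produces both the $L^p$ decay in (ii) and the pointwise bounds needed below.

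The localization in (iii) and in \eqref{25} is where the geometry enters. In $\Omega_c$ each rarefaction lies strictly outside its own fan: for the $1$-wave the region forces $x\ge\tfrac12\lambda_1(v_-^o,s_-)\,t$, which sits to the right of the right edge $\lambda_1(v_-^o,s_-)t$ of the fan by a margin $\sim|\lambda_1(v_-^o,s_-)|t$, so $w-w_+$ and all its derivatives are exponentially small there, giving $|v_{r_1}-v_-^o|+\dots\le C\delta e^{-c_0(\abs{x}+t)}$, and symmetrically for the $3$-wave. For the contact wave in $\Omega_\mp$ one converts the self-similar Gaussian into a genuine exponential: on $\Omega_-$ one has $\abs{x}\ge\tfrac12|\lambda_1(v_-^o,s_-)|t$, whence $\tfrac{x^2}{1+t}\gtrsim\abs{x}+t$ for $t\ge1$, so \eqref{equ6} upgrades $e^{-C_2x^2/(1+t)}$ to $e^{-c_0(\abs{x}+t)}$; this is exactly the computation that forces the stated choice $c_0=\tfrac1{10}\min\{\dots,C_2\lambda_1^2(v_-^o,s_-),\dots\}$. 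The same margin makes the foreign rarefaction ($3$-wave on $\Omega_-$, $1$-wave on $\Omega_+$) exponentially small. Finally (iv) follows because the Burgers profile $w$ converges uniformly to the rarefaction-fan solution of \eqref{equ10} as $t\to\infty$, and the smooth relations \eqref{equ9} carry this over to $(v_{r_i},u_{r_i},\theta_{r_i})\to(v^{R_i},u^{R_i},\theta^{R_i})(x/t)$.

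The main obstacle is bookkeeping of constants rather than any single hard estimate: one must produce the sharp two-sided rate $\min\{\delta,\delta^{1/p}t^{-1+1/p}\}$ uniformly in $p\in[1,\infty]$ for the rarefaction derivatives, and --- more delicately --- carry out the Gaussian-to-exponential conversion for the contact wave with a single constant $c_0$ that is simultaneously compatible with the two wave-speed margins and with the Gaussian rate $C_2$ from \eqref{equ6}. Matching the three regions with one $c_0$, so that the cross-terms between the contact layer and either rarefaction fan are all dominated by $C\delta e^{-c_0(\abs{x}+t)}$, is the step requiring the most care; it is precisely the quantitative content imported from \cite{Huang2010,Huang2008,Matsumura1986}.
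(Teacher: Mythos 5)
The paper offers no proof of \cref{lemm2} at all---it is imported verbatim from \cite{Huang2010} (with the rarefaction estimates going back to \cite{Matsumura1986})---and your sketch correctly reconstructs exactly that standard argument: Burgers-profile bounds transferred through \eqref{equ9}, the self-similar Gaussian bound \eqref{equ6} for the contact layer, and the wave-speed separation to localize each wave in $\Omega_\mp,\Omega_c$ and to convert $e^{-C_2x^2/(1+t)}$ into $e^{-c_0(|x|+t)}$. The only minor imprecision is in (i), where the sign of $(u_{r_i})_x=-\lambda_i w_x/\partial_v\lambda_i$ requires checking that $-\lambda_i/\partial_v\lambda_i>0$ for both $i=1,3$ (which holds for the $\gamma$-law gas), not merely $\partial_v\lambda_i\neq0$.
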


Here we introduce an essential inequality about the heat kernel from \cite{Huang2010}. Define
\begin{align}\label{eq47}
\begin{aligned}
w(x,t)=(1+t)^{-\frac{1}{2}}e^{-\frac{ \sigma x^2}{1+t}}, \ \ g(x,t)=\int_{-\infty}^{x}w(y,t)dy,
\end{aligned}
\end{align}
for $\sigma>0$. It is easy to know that
\begin{align}\label{eq48}
\begin{aligned}
4\sigma g_t=w_x,\ \ \|g(\cdot, t)\|_{L^\infty}=\sqrt{\pi}\sigma^{-\frac{1}{2}}.
\end{aligned}
\end{align}

\begin{Lem}\label{lemm9}(\cite{Huang2010})
Assume that $h(x,t)$ satisfies
\begin{align*}
h_x\in L^2(0, T;L^2(\mathbb{R})), \ \ h_t\in L^2(0,T;H^{-1}(\mathbb{R})),
\end{align*}
for any $0<T\le +\infty$. Then there holds
\begin{align}\label{eq50}
\begin{aligned}
\int_0^T&\int_{\mathbb{R}}h^2w^2dxdt\\
&\le4\pi\|h(0)\|^2+4\pi\sigma^{-1}\int_0^T\|h_x(t)\|^2dt+8\sigma\int_0^T\big<h_t, hg^2\big>_{H^{-1}\times H^1}dt.
\end{aligned}
\end{align}
\end{Lem}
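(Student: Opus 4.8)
The plan is to prove \eqref{eq50} by a weighted integration-by-parts argument that exploits precisely the two structural identities recorded in \eqref{eq48}, namely $w=g_x$, $w_x=4\sigma g_t$ and $\|g(\cdot,t)\|_{L^\infty}=\sqrt{\pi}\sigma^{-1/2}$. Throughout I would first argue for $h$ smooth with rapid spatial decay and recover the general case by density at the end. The hypotheses $h_x\in L^2(0,T;L^2)$ and $h_t\in L^2(0,T;H^{-1})$ (together with $h\in L^2(0,T;H^1)$, as holds in our application) place $h$ in a Gelfand triple $H^1\hookrightarrow L^2\hookrightarrow H^{-1}$, so the Lions--Magenes embedding gives $h\in C([0,T];L^2)$; hence the traces $h(0),h(T)$ are well defined in $L^2(\R)$, and the pairing $\la h_t,hg^2\ra_{H^{-1}\times H^1}$ is meaningful since $hg^2\in H^1(\R)$ (using $g,g_x=w\in L^\infty$).

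First I would write one factor of $w$ as $g_x$ and integrate by parts in $x$, the boundary contributions vanishing by the decay of $w,g$ and $h\in H^1$:
\begin{equation*}
\int_{\R}h^2w^2\,dx=\int_{\R}h^2wg_x\,dx=-2\int_{\R}hh_xwg\,dx-\int_{\R}h^2w_xg\,dx.
\end{equation*}
In the last term I substitute $w_x=4\sigma g_t$ and use $g_tg=\tfrac12(g^2)_t$, turning it into $-2\sigma\int_{\R}h^2(g^2)_t\,dx$. Integrating in time and integrating by parts in $t$ then yields the endpoint contributions together with the duality term, via $\int_{\R}(h^2)_tg^2\,dx=2\la h_t,hg^2\ra$:
\begin{equation*}
-2\sigma\int_0^T\!\!\int_{\R}h^2(g^2)_t\,dx\,dt=-2\sigma\int_{\R}h^2g^2\big|_{t=T}\,dx+2\sigma\int_{\R}h^2g^2\big|_{t=0}\,dx+4\sigma\int_0^T\la h_t,hg^2\ra\,dt.
\end{equation*}
The term at $t=T$ is nonpositive and is simply discarded, while the term at $t=0$ is bounded by $2\sigma\|g(\cdot,0)\|_{L^\infty}^2\|h(0)\|^2=2\pi\|h(0)\|^2$, using the second identity in \eqref{eq48}.

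It then remains to absorb the cross term. Applying Young's inequality in the form $2|h|w\cdot|h_x||g|\le\tfrac12h^2w^2+2h_x^2g^2$ and again invoking $\|g\|_{L^\infty}^2=\pi/\sigma$ gives
\begin{equation*}
-2\int_0^T\!\!\int_{\R}hh_xwg\,dx\,dt\le\frac12\int_0^T\!\!\int_{\R}h^2w^2\,dx\,dt+\frac{2\pi}{\sigma}\int_0^T\|h_x(t)\|^2\,dt.
\end{equation*}
Collecting the three estimates, half of the quantity $\int_0^T\!\int_{\R}h^2w^2\,dx\,dt$ is absorbed into the left-hand side, and multiplying through by $2$ produces exactly \eqref{eq50} with the stated constants $4\pi$, $4\pi\sigma^{-1}$ and $8\sigma$.

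The chain of integrations by parts is routine; the one genuine subtlety, and the step I would justify most carefully, is the time integration by parts under the weak hypothesis $h_t\in L^2(0,T;H^{-1})$: one must validate $\frac{d}{dt}\int_{\R}h^2g^2\,dx=\int_{\R}(h^2)_tg^2\,dx+\int_{\R}h^2(g^2)_t\,dx$ with $\int_{\R}(h^2)_tg^2\,dx$ read as $2\la h_t,hg^2\ra_{H^{-1}\times H^1}$. This follows from the product/chain rule for the Gelfand triple after approximating $h$ by smooth functions and passing to the limit. The favorable sign of the discarded endpoint term at $t=T$ and the sharp value of $\|g\|_{L^\infty}$ are exactly what make the constants come out as claimed.
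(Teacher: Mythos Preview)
Your argument is correct and is exactly the standard proof from \cite{Huang2010}: write $w=g_x$, integrate by parts in $x$, convert the $w_x$ term via $w_x=4\sigma g_t$ into a time derivative of $g^2$, integrate by parts in $t$, discard the favorable endpoint at $t=T$, and absorb the cross term by Young's inequality with the sharp bound $\|g\|_{L^\infty}^2=\pi/\sigma$. The paper under review does not give its own proof of this lemma---it is simply quoted from \cite{Huang2010}---so there is nothing to compare against here beyond noting that your derivation reproduces the constants $4\pi$, $4\pi\sigma^{-1}$, $8\sigma$ on the nose.

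One small remark: you correctly flag that the Lions--Magenes embedding $h\in C([0,T];L^2)$ and the product rule $\tfrac{d}{dt}\int h^2g^2\,dx=2\la h_t,hg^2\ra+\int h^2(g^2)_t\,dx$ require $h\in L^2(0,T;H^1)$, which is not literally among the stated hypotheses (only $h_x\in L^2_tL^2_x$ is). Your parenthetical ``as holds in our application'' is the right way to handle this, since in the paper the lemma is only ever applied to $h=\phi,\psi,\zeta$ which lie in $C([0,T];H^1)$ by the solution space $X(I)$.
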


The periodic solutions $(\bar v_\pm, \bar u_\pm, \bar\theta
 _\pm)$ and $(\bar v^o_\pm, \bar u_\pm^o,
 \bar \theta
 ^o_\pm)$ satisfy the following properties:

\begin{Lem}\label{lemm1}(\cite{YuanYuan2021}) 
If periodic functions $\phi_j(x,0)\in H^3((0,1))\ ( j=1,2,3)$ satisfy \eqref{zero}, there exists $\epsilon_0>0$ such that if
\begin{align}\label{eq25}
\epsilon_1:=\|(\phi_{1}, \phi_{2}, \phi_{3})(x,0)\|_{H^3((0,1))}\le \epsilon_0,
\end{align}
then there exists a unique periodic solution
$$(\bar v_\pm, \bar u_\pm, \bar 
\theta
_\pm)(x,t),  \text{and}\ (\bar v^o_\pm, \bar u_\pm^o,
 \bar 
 \theta
 ^o_\pm)(x,t) \in C\big(0,+\infty;H^3((0,1))\big)$$
 to \eqref{Equ1} with the initial data $(\bar v_\pm,\bar u_\pm, \bar E
 _\pm)(x,0)$ and $(\bar v^o_\pm, \bar u_\pm^o, \bar E
 ^o_\pm )(x,0)$, respectively, which satisfy
\begin{align}\label{p2}
\begin{aligned}
 \|(\widetilde v_{\pm}, \widetilde u_\pm, \widetilde E
 _\pm)(\cdot,t)\|_{W^{2,\infty}(\mathbb{R})}\le C\epsilon_1e^{-2\alpha t},\ \ t\ge0, \\
 \text{and}\ \
  \|(\widetilde v^o_{\pm}, \widetilde u^o_\pm, \widetilde E
  ^o_\pm)(\cdot,t)\|_{W^{2,\infty}(\mathbb{R})}\le C\epsilon_1e^{-2\alpha t},\ \ t\ge0, 
 \end{aligned}
 \end{align}
respectively, where constants $\alpha>0$ and $C>0$ are independent of $t$ and $\epsilon_1$.
\end{Lem}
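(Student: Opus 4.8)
The plan is to establish Lemma \ref{lemm1} as a stability result for a constant state of \eqref{Equ1} under zero-average periodic perturbations; since the two families $(\bar v_\pm,\bar u_\pm,\bar\theta_\pm)$ and $(\bar v^o_\pm,\bar u^o_\pm,\bar\theta^o_\pm)$ solve the same problem around different constant backgrounds, it suffices to treat a generic constant state $(v_*,u_*,\theta_*)$ and apply the conclusion to each. Writing the solution as a perturbation $(\widetilde v,\widetilde u,\widetilde E):=(\bar v-v_*,\bar u-u_*,\bar E-E_*)$, the first step is the observation behind Remark \ref{rem1}: each equation of \eqref{Equ1} is in divergence form $\partial_t(\cdot)=\partial_x(\cdot)$ in the conservative variables $(v,u,E)$, so integrating over one period $(0,1)$ and using periodicity gives $\frac{d}{dt}\int_0^1(\widetilde v,\widetilde u,\widetilde E)\,dx=0$. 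Because the initial perturbation $(\phi_1,\phi_2,\phi_3)$ has zero average by \eqref{zero}, it follows that $\int_0^1(\widetilde v,\widetilde u,\widetilde E)(\cdot,t)\,dx=0$ for all $t\ge0$. This persistence of zero average is the engine of exponential (rather than merely algebraic) decay, since on the fixed torus $(0,1)$ it activates the Poincar\'e--Wirtinger inequality $\|f\|_{L^2(0,1)}\le C\|f_x\|_{L^2(0,1)}$ for zero-mean $f$ --- equivalently, the spectral gap of the periodic Laplacian.

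Second, I would obtain local-in-time existence and uniqueness in $C([0,T];H^3((0,1)))$ by the standard iteration scheme for the quasilinear hyperbolic--parabolic system (the $v$-equation is transport, while $u,\theta$ satisfy parabolic equations with $v$-dependent coefficients), using $\epsilon_0$ small to keep $\bar v,\bar\theta$ bounded away from zero. The core is then a uniform-in-time a priori estimate. A basic energy identity (multiplying the perturbed equations by the natural combination coming from the relative entropy / physical energy) yields control of $\|(\widetilde v,\widetilde u,\widetilde E)\|^2$ together with the viscous dissipation $\int(\widetilde u_x^2+\widetilde\theta_x^2)\,dx$, the nonlinear remainders being cubic and absorbable once $\epsilon_0$ is small. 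The hyperbolic variable $\widetilde v$ carries no intrinsic dissipation, so I would recover it through the classical cross-term: multiply the momentum equation by $\widetilde v_x$, integrate, and use $\widetilde v_t=\widetilde u_x$ to convert $\int\widetilde u_t\widetilde v_x\,dx$ into a time-derivative plus $-\int\widetilde u_x^2\,dx$, while the pressure gradient supplies a positive $\int\widetilde v_x^2\,dx$. Differentiating the equations (derivatives of periodic functions again have zero mean) propagates the same structure and closes the estimate at the $H^3$ level.

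Third, assembling these pieces gives an energy functional $\mathcal E(t)\sim\|(\widetilde v,\widetilde u,\widetilde E)(t)\|_{H^3}^2$ and a dissipation $\mathcal D(t)\sim\|(\widetilde v_x,\widetilde u_x,\widetilde\theta_x)\|_{H^2}^2$ with $\frac{d}{dt}\mathcal E+c\mathcal D\le0$. Since $\widetilde v,\widetilde u,\widetilde E$ and all their $x$-derivatives have zero mean, Poincar\'e gives $\mathcal E\le C\mathcal D$, hence $\frac{d}{dt}\mathcal E+c'\mathcal E\le0$ and $\mathcal E(t)\le C\epsilon_1^2 e^{-c't}$. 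The $W^{2,\infty}$ bound \eqref{p2} then follows from the one-dimensional embedding $H^3((0,1))\hookrightarrow W^{2,\infty}$, with $2\alpha:=c'/2$, and combining the uniform bound with local existence extends the solution globally by continuation.

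The main obstacle is twofold. The first is the recovery of dissipation for the hyperbolic variable $\widetilde v$ described above --- this cross-term estimate is delicate because it must be weighted against the energy so that the combined functional stays positive-definite while its dissipation still dominates $\mathcal E$ after Poincar\'e. The second, more subtle point specific to the full system, is that the zero-average property is exact only for the conservative triple $(\widetilde v,\widetilde u,\widetilde E)$: from $E=\frac{R\theta}{\gamma-1}+\frac{u^2}{2}$ the temperature perturbation inherits the average $\int_0^1\widetilde\theta\,dx=-\frac{\gamma-1}{2R}\int_0^1\widetilde u^2\,dx$, where the linear part drops precisely because $\int_0^1\widetilde u\,dx=0$, leaving an $O(\epsilon_1^2)$ defect. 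When applying Poincar\'e to $\widetilde\theta$ one must carry this quadratic correction and absorb it using smallness, which is exactly why the perturbation is imposed on the conservative variables in \eqref{pIB} rather than on $(v,u,\theta)$.
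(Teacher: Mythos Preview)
Your proposal is correct and aligns with what the paper does: the paper gives no detailed proof of this lemma, stating only that it ``is proved by the standard energy method (see \cite{YuanYuan2021}),'' and your sketch is precisely that standard argument --- conservation of zero mean in the conservative variables, Poincar\'e on the torus, the cross-term trick to recover dissipation for $\widetilde v$, and Gronwall for exponential decay. Your remarks on why the perturbation must be imposed on $(v,u,E)$ rather than $(v,u,\theta)$ exactly capture the content of Remark~\ref{rem1}.
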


The Lemma \ref{lemm1} is proved by the standard energy method (see \cite{YuanYuan2021}).
Let $$I_1:=|\eta(1-\eta)|,
\ I_2:=|\nabla_{t,x}^m \eta |.$$

By
 direct calculations, 
one can obtain
\begin{equation}\label{equ17}
\begin{cases}
\bar v_{t}-\bar u_{x}=F,\\
\bar u_{t}+\bar p_{x}={\mu}(\frac{\bar u_{x}}{\bar v})_{x}+G,  \qquad  \qquad \qquad \ \ \ \ \ \ \ \ \ \ \ \ x\in\R,\  t>0, \\
\frac{R}{\gamma-1}\bar \theta_t+\bar p\bar u_x={\kappa}(\frac{{\bar\theta}_{x}}{\bar v})_{x}+{\mu}\frac{\bar u^2_{x}}{\bar v}+H,
\end{cases}
\end{equation}
with 
\begin{align}\label{equ180}
\begin{aligned}
	&F=\eta_t(\widetilde v_-^o-\widetilde v_-)-\eta_x(\widetilde u_-^o-\widetilde u_-)
	+\eta_t( \widetilde v_+-\widetilde v_+^o)-\eta_x( \widetilde u_+-\widetilde u_+^o)\\
	&\qquad+\eta_{t}(\widetilde v_+^o-\widetilde v_-^o)-\eta_x(\widetilde u_+^o-\widetilde u_-^o)\\
	&\quad \le C\max\big\{ I_2
	\big\}\epsilon_0e^{-2\alpha t}, 
\end{aligned}
\end{align}
\begin{align}\label{equ181}
\begin{aligned}	
	&G=-\eta_t\widetilde u_- +\eta_{t}\widetilde u_+ +\eta(\bar u_{+})_{t}
	+(1-\eta_{u_{r_1}})(\bar u_{-})_{t}+Q_1\\
	&\qquad +(\bar p-p_{r_1}-p_{r_3}-p^{cd})_x+\mu\big(\frac{(u^{cd})_x}{v^{cd}}-\frac{\bar u_x}{\bar v}\big)_x\\
	&\quad\le \Big\{C\max\{I_1,I_2\}\max\big\{\epsilon_0e^{-2\alpha t}, \delta e^{-c_0(|x|+t)}\big\}+O(\delta)(1+t)^{-\frac{3}{2}}e^{-\frac{C_2x^2}{1+t}}\Big\}\\
	&\qquad+\frac{\mu}{\bar v}(|u_{{r_1}xx}|+|u_{{r_3}xx}|)
	+C\big(|(u^{cd})_{xx}|+|(u^{cd})_x|+|v_{{r_1}x}|+|v_{{r_3}x}|\\&\qquad+|(v^{cd})_{x}|\big)\max\big\{\epsilon_0e^{-2\alpha t}, \delta e^{-c_0(|x|+t)}\big\}\\
	&\quad:=G_1+G_2+G_3, 
\end{aligned}
\end{align}
\begin{align}\label{equ182}
\begin{aligned}
	&H=\frac{R}{\gamma-1}\big[\eta_{t}(\widetilde \theta_-^o-\widetilde \theta_-)+\eta_{t}(\widetilde \theta_+-\widetilde \theta_+^o)+\eta_{ t}(\widetilde \theta_+^o-\widetilde \theta_-^o)\\
	&\qquad+(1-\eta)(\bar\theta_{-})_{t} +\eta(\bar\theta_{+})_{t}\big]
	+
	\big[-\eta_{x}\widetilde u_- +\eta_{x}\widetilde u_+\\
	&\qquad+(1-\eta)(\widetilde u_{-})_{x}+\eta(\widetilde u_{+})_{x}
	+(u^{cd}+u_{r_1}+u_{r_3}-u^o)_x\big]\bar p\\
&\qquad-p_{r_1}u_{{r_1}x}-p_{r_3}u_{{r_3}x}-p^{cd}(u^{cd})_{x}+Q_2 -u^{cd}Q_1\\
	&\qquad+\kappa\big(\frac{(\theta^{cd})_x}{v^{cd}}-\frac{\bar\theta_x}{\bar v}\big)_x+\mu\big( \frac{\big((u^{cd})_x\big)^2}{v^{cd}}-\frac{(\bar u_x)^2}{\bar v}\big)\\
	&\quad \le\Big\{ C\max\{I_1,I_2\}\max\big\{\epsilon_0e^{-2\alpha t}, \delta e^{-c_0(|x|+t)}\big\}+O(\delta)(1+t)^{-2}e^{-\frac{C_2x^2}{1+t}}\Big\}\\
	&\qquad+\frac{\kappa}{\bar v}(|\theta_{{r_1}xx}|+|\theta_{{r_3}xx}|)
	+C\big[|(u^{cd})_{x}|+u_{{r_1}x}+u_{{r_3}x}+|(\theta^{cd})_x|+|\theta_{{r_1}x}|\\
	&\qquad+|\theta_{{r_3}x}|+|(\theta^{cd})_{xx}|+\big((u^{cd})_x\big)^2\big]\max\big\{\epsilon_0e^{-2\alpha t}, \delta e^{-c_0(|x|+t)}\big\}\\
	&\qquad+C\big[(u_{r_1x})^2+(u_{r_3x})^2+\big((u^{cd})_x\big)^2\big]\\
	&\quad:=H_1+H_2+H_3+H_4, 
\end{aligned}
\end{align}

and note that
\begin{align}\label{eq180}
\|F\|_{L^1}\le C\epsilon_0
e^{-2\alpha t},
\end{align}
\begin{align}\label{eq181}
\|G_2\|_{L^1}\le C\delta^{\frac{1}{8}}(1+t)^{-\frac{7}{8}}, \ \
\|G_3\|_{L^1}\le C\epsilon_\delta e^{-\hat Ct},
\end{align}
and
\begin{align}\label{eq182}
\|H_2\|_{L^1}\le C\delta^{\frac{1}{8}}(1+t)^{-\frac{7}{8}}, \ \
\|H_4\|_{L^1}\le C\delta(1+t)^{-1},
\end{align}
 where $\bar p=R\frac{\bar\theta}{\bar v}$,  
 $\hat C=\min\{2\alpha, c_0, C_2\}$ and $\epsilon_\delta$ is suitably small constant depending on $\epsilon_0$ and $\delta$.

\section{Proof of Theorem \ref{thm1}}

In this section, we shall prove Theorem \ref{thm1}. Note that, by Lemma \ref{lemm2} and \eqref{ap}, we can obtain that
\begin{equation}\label{error1}
\lim_{t\rightarrow\infty}\sup_{x\in\mathbb{R}}|(\widetilde V,\widetilde U,\widetilde \Theta)(x,t)-(\bar V,\bar U, \bar \Theta)(x,t)|=0.
\end{equation}
Additionally, by Lemma \ref{lemm1}, we can easily verify that
\begin{equation}\label{error2}
\lim_{t\rightarrow\infty}\sup_{x\in\mathbb{R}}|(\widetilde V,\widetilde U,\widetilde \Theta)(x,t)-(\bar v,\bar u, \bar \theta)(x,t)|=0.
\end{equation}
Hence, to show \eqref{eq030} in Theorem \ref{thm1}, it suffices to prove that
\begin{equation}\label{error3}
\lim_{t\rightarrow\infty}\sup_{x\in\mathbb{R}}|(v,u,\theta)(x,t)-(\bar v,\bar u, \bar \theta)(x,t)|=0.
\end{equation}

Therefore, we will focus on the reformulated system described by the perturbation $(\phi, \psi, \zeta)=(v-\bar v, u-\bar u, \theta-\bar\theta)(x,t)$:
\begin{equation}\label{equ20}
\begin{cases}
\phi_{t}-\psi_{x}=-F,\\
\psi_{t}+(p-\bar p)_{x}={\mu}(\frac{ u_{x}}{ v}-\frac{\bar u_{x}}{\bar v})_{x}-G,  \qquad  \qquad \qquad \ \ \ \ \ \ \ \ \ \ \ \ x\in\R,\  t>0, \\
\frac{R}{\gamma-1} \zeta_t+pu_x-\bar p\bar u_x={\kappa}(\frac{{\theta}_{x}}{v}-\frac{{\bar\theta}_{x}}{\bar v})_{x}+{\mu}(\frac{ u^2_{x}}{ v}-\frac{\bar u^2_{x}}{\bar v})-H,
\end{cases}
\end{equation}
with
\begin{align}\label{equa20}
\begin{aligned}
(\phi_0,\psi_0,\zeta_0)(x)=\Big(0,0,&-\frac{\gamma-1}{R}\phi_2(x)
u^{cd}(x,0)\Big)\in H^2(\mathbb{R}).
\end{aligned}
\end{align}


We first state the local existence of solution $(\phi,\psi,\zeta)$ in the solution space $X_{\underline\iota,\iota}(0,+\infty)=\cup_{T>0}X_{\underline\iota,\iota}(0,T)$, where
$$X_{\underline\iota,\iota}(0,T)=\big\{(\phi,\psi,\zeta)\in X(I), \|(\phi,\psi,\zeta)\|_1\le \iota, \inf\limits_{x,t}(v+\phi_0)\ge \underline\iota>0\big\},$$
for positive constants $\iota$ and $\underline\iota.$

\begin{Prop}\label{pro151}{(Local existence)}
There exists a positive constant $b$ 
and $T^*=T^*(\underline\iota,\iota)
$ such that if $\|(\phi_0,\psi_0,\zeta_0)\|_1\le \iota
$ and $\inf_{\mathbb{R}.}(\bar v+\phi_0)\ge \underline\iota$, then 
there exists a unique solution 
$(\phi, \psi, \zeta)\in X_{\frac{1}{2}\underline\iota,b\iota}
(0, T^*)$ to \eqref{equ20}.
\end{Prop}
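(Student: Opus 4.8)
The plan is to construct the solution by a linearization--iteration scheme together with the Banach fixed point theorem, exploiting that the reformulated system \eqref{equ20} splits into a transport equation for $\phi$ and two uniformly parabolic equations for $\psi$ and $\zeta$. The key observation is that the first equation, $\phi_t=\psi_x-F$, is an ordinary differential equation in $t$ once $\psi$ is prescribed, so that
\begin{equation*}
\phi(x,t)=\phi_0(x)+\int_0^t\big(\psi_x-F\big)(x,s)\,ds,
\end{equation*}
and hence $v=\bar v+\phi$, together with all the coefficients of the parabolic part, is determined by $\psi$ alone. This reduces \eqref{equ20} to a quasilinear parabolic system for the pair $(\psi,\zeta)$.

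First I would fix a small time $T^\ast$ and work in the complete metric space
\begin{equation*}
\mathcal{B}=\Big\{(\psi,\zeta)\in C\big([0,T^\ast];H^1(\R)\big):\ N(\psi,\zeta)\le b\iota,\ \inf_{x,t}(\bar v+\phi)\ge\tfrac12\underline\iota\Big\},
\end{equation*}
where $\phi$ is reconstructed from $\psi$ by the integral formula above and
\begin{equation*}
N(\psi,\zeta)^2=\sup_{[0,T^\ast]}\norm{(\psi,\zeta)}_1^2+\int_0^{T^\ast}\norm{(\psi_x,\zeta_x)}_1^2\,dt .
\end{equation*}
Given a starred iterate $(\psi^\ast,\zeta^\ast)\in\mathcal{B}$ with $v^\ast=\bar v+\phi^\ast$, I define $(\psi,\zeta)=\mathcal{T}(\psi^\ast,\zeta^\ast)$ as the solution of the \emph{linear} parabolic problems obtained by freezing all coefficients at the starred iterate, namely $\psi_t-\mu\big(\psi_x/v^\ast\big)_x=(\text{lower order in }\psi^\ast,\zeta^\ast)-G$ and the analogous equation for $\zeta$, subject to the initial data $(\psi_0,\zeta_0)$ from \eqref{equa20}. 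Standard linear parabolic theory (Galerkin approximation plus energy estimates) yields a unique solution in $C([0,T^\ast];H^1)\cap L^2(0,T^\ast;H^2)$.

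The substance is the a priori estimate showing $\mathcal{T}(\mathcal{B})\subset\mathcal{B}$. Multiplying the $\psi$-- and $\zeta$--equations by $\psi,\zeta$ and by $-\psi_{xx},-\zeta_{xx}$ and integrating in $x$, I would use the uniform lower bound $v^\ast\ge\frac12\underline\iota$ to render the principal terms coercive, estimate the nonlinear and quadratic contributions (for instance $u_x^2/v$) by $N(\psi,\zeta)$ times the coefficient bounds of \cref{lemm2} and \cref{lemm1}, and absorb the inhomogeneities $G,H$ using \eqref{equ181} and \eqref{equ182}. This produces a differential inequality of Gronwall type,
\begin{equation*}
\frac{d}{dt}\norm{(\psi,\zeta)}_1^2+c\,\norm{(\psi_x,\zeta_x)}_1^2\le C\big(\norm{(\psi,\zeta)}_1^2+1\big),
\end{equation*}
whose integration over $[0,T^\ast]$ keeps $N(\psi,\zeta)\le b\iota$ once $b$ and then $T^\ast$ are chosen suitably. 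At the same time the integral formula gives
\begin{equation*}
\norm{\phi(t)-\phi_0}_{L^\infty}\le\int_0^t\big(\norm{\psi_x}_{L^\infty}+\norm{F}_{L^\infty}\big)\,ds\le C\sqrt{T^\ast}\,b\iota+CT^\ast,
\end{equation*}
so that, since $\inf(\bar v+\phi_0)\ge\underline\iota$, shrinking $T^\ast$ preserves $\inf(\bar v+\phi)\ge\frac12\underline\iota$. This positivity step is where $T^\ast=T^\ast(\underline\iota,\iota)$ is ultimately pinned down, and it is the crucial point: because the $\phi$--equation carries no smoothing, the lower bound on the specific volume must be propagated by hand through the whole interval.

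Finally, to obtain existence and uniqueness I would show $\mathcal{T}$ is a contraction in the weaker metric of $C([0,T^\ast];L^2(\R))$. Writing the system satisfied by the difference of two images and running the same energy estimate --- now only at the $L^2$ level, so that no top-order regularity need be recovered --- gives a bound $\norm{\mathcal{T}(w_1)-\mathcal{T}(w_2)}\le C(T^\ast)^{\alpha}\norm{w_1-w_2}$ in that metric, with some $\alpha>0$, which is a strict contraction after a last reduction of $T^\ast$. The Banach fixed point theorem then furnishes a unique fixed point $(\psi,\zeta)$; the associated triple $(\phi,\psi,\zeta)$ solves \eqref{equ20}, enjoys the regularity defining $X(I)$, and by construction lies in $X_{\frac12\underline\iota,b\iota}(0,T^\ast)$. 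The main obstacle throughout is the interplay between the quasilinear, non-divergence diffusion $\mu(u_x/v)_x$ and the transport-only equation for $\phi$: the absence of smoothing in $\phi$ forces one to recover its spatial regularity entirely from $\psi$, while keeping the positivity of $v$ consistent across the scheme.
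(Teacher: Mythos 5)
The paper does not actually carry out this proof: it states that the argument is similar to that in \cite{Huang2004} and omits it. Your linearization--iteration scheme with energy estimates, reconstruction of $\phi$ from $\psi$ by time integration, propagation of the lower bound on $v$ by shrinking $T^*$, and contraction in a weaker norm is precisely the standard argument used in that reference, and the proposal is essentially correct (the only minor imprecision is that the contraction metric should also include the $L^2_t H^1_x$ norm of the difference of $(\psi,\zeta)$, since controlling $\phi_1-\phi_2$ through the integral formula requires $\int_0^{T^*}\|\psi_{1x}-\psi_{2x}\|^2\,dt$).
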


Since the proof of Proposition \ref{pro151} is similar to that in \cite{Huang2004}, we omit it for simplicity. Assume that 
\begin{align}\label{eq32}
 \underline\iota^o=\frac{1}{2}\min\{v_-,v_+\}, \ \ N(T)=\sup\limits_{t\in[0,T]}\|(\phi, \psi, \zeta)(\cdot, t)\|^2_{1}<\chi^2, \ \ 
\forall T>0,
\end{align}
where $\chi$ is a small constant.
To prove the global existence, we only need to 
 establish the following proposition:
\begin{Prop}\label{pro15}{(A priori estimate)}
There exist suitably small positive constants $\epsilon_0$, $\chi$ and $\delta_0\le\min\big\{\bar\delta,\delta_1,1\big\}$, such that for any $T>0$ and $(\phi, \psi,\zeta)\in X_{\frac{1}{2}\underline\iota^o,\chi}(I) 
$ 
 satisfying \eqref{eq32} and $|\theta_--\theta_+|=\delta\le\delta_0$,
there holds
\begin{align}\label{eq91}
\begin{aligned}
\sup_{t\in[0,T]}&\|(\phi,\psi,\zeta)(t)\|_1^2+\int_0^T\big(\|\phi_x(s)\|^2+\|(\psi_x, \zeta_x)(s)\|^2_1\big)ds\\
&+\int_0^T\int_{\mathbb{R}}(\phi^2+\zeta^2)(u_{r_1x}+u_{r_3x})dxds\le C\big(\chi\epsilon_\delta+\|(\phi_0, \psi_0,\zeta_0)\|_1^2\big),\end{aligned}
\end{align}
for some positive constant $C$, where $\epsilon_\delta$ is suitably small and only depends on $\epsilon_0$ and $\delta$.
\end{Prop}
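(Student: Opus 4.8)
The plan is to establish \eqref{eq91} by a hierarchy of weighted energy estimates in the spirit of \cite{Huang2010}, using the monotonicity of the rarefaction profiles for the coercive terms and the heat-kernel inequality of \cref{lemm9} to absorb the contact-wave contributions. \textbf{Basic estimate.} First I would introduce the relative-entropy density
\[
\mathcal{E}=\frac{\psi^2}{2}+R\bar\theta\,\Phi\!\Big(\frac{v}{\bar v}\Big)+\frac{R}{\gamma-1}\bar\theta\,\Phi\!\Big(\frac{\theta}{\bar\theta}\Big),\qquad \Phi(s)=s-1-\ln s,
\]
and compute $\frac{d}{dt}\int\mathcal{E}\,dx$ along \eqref{equ20}. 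After integration by parts the viscous and heat-conduction terms produce the dissipation $\int\big(\frac{\mu\bar\theta}{v\theta}\psi_x^2+\frac{\kappa\bar\theta}{v\theta^2}\zeta_x^2\big)\,dx$, while the convective coupling with the approximate rarefaction waves yields the good term $\int(\phi^2+\zeta^2)(u_{r_1x}+u_{r_3x})\,dx$, which is nonnegative by \cref{lemm2}(i) and is exactly the quantity appearing on the left of \eqref{eq91}. The result is a differential inequality $\frac{d}{dt}\int\mathcal{E}\,dx+(\text{dissipation})+(\text{rarefaction term})\le(\text{errors from }F,G,H)$.

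Next I would control the errors in \eqref{equ180}--\eqref{equ182} by splitting them according to their decay. The exponentially decaying pieces (all of $F$, the term $G_3$, and the parts of $G_1,H_1$ carrying $\epsilon_0e^{-2\alpha t}$ or $\delta e^{-c_0(|x|+t)}$) are time-integrable and are absorbed directly via \eqref{eq180}--\eqref{eq182} together with $\|(\phi,\psi,\zeta)\|_{L^\infty}\le C\|(\phi,\psi,\zeta)\|^{1/2}\|(\phi_x,\psi_x,\zeta_x)\|^{1/2}$. The slowly decaying pieces $G_2,H_2$, whose $L^1$-norms are only $O(\delta^{1/8}(1+t)^{-7/8})$, and $H_4=O(\delta(1+t)^{-1})$, are \emph{not} integrable in time, so for these I would estimate, e.g., $\big|\int G_2\,\psi\,dx\big|\le\|G_2\|_{L^1}\|\psi\|_{L^\infty}$ and use the interpolation above to trade $\|\psi_x\|$ against the viscous dissipation, closing by the smallness of $\delta$ and $\chi$; the quadratic term $H_4$ is controlled analogously, partly against the good rarefaction term.

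The genuinely delicate contributions are those carrying the contact-wave weight $(1+t)^{-3/2}e^{-C_2x^2/(1+t)}$ coming from $Q_1,Q_2$ and the contact-wave derivatives in $G_1,H_1$, because the monotonicity that tames the rarefaction part is unavailable and the resulting space-time integrals $\int_0^T\!\!\int\phi^2w^2\,dx\,dt$ and $\int_0^T\!\!\int\zeta^2w^2\,dx\,dt$ cannot be handled by a Poincar\'e-type bound. Here I would invoke \cref{lemm9} with $h=\phi$ or $h=\zeta$: the term $\|h(0)\|^2$ is part of the data, the term $\sigma^{-1}\int_0^T\|h_x\|^2\,dt$ is absorbed into the dissipation once $\sigma$ is fixed large, and the duality term $\int_0^T\ang{h_t,hg^2}\,dt$ is reduced, via \eqref{equ20} and $\|g\|_{L^\infty}=\sqrt{\pi}\sigma^{-1/2}$, to lower-order quantities already under control. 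This is the step I expect to be the main obstacle, since it must be arranged so that the dissipation borrowed here stays strictly below what the basic estimate supplies.

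Finally I would carry out the first-order estimate: differentiating \eqref{equ20} in $x$ and pairing with $(\phi_x,\psi_x,\zeta_x)$ gives the second-order dissipation $\int(\psi_{xx}^2+\zeta_{xx}^2)\,dx$ immediately, while to recover $\int_0^T\|\phi_x\|^2\,dt$ I would add a Kanel'-type cross term pairing $\psi$ with $\phi_x/\bar v$ (equivalently $(\ln(v/\bar v))_x$), exploiting that the linearization of $(p-\bar p)_x$ furnishes the coercive contribution $\int\frac{R\bar\theta}{\bar v^3}\phi_x^2\,dx$. The new errors are treated exactly as in the previous two paragraphs, now also using the $L^2$-bounds on $\partial_x F,\partial_x G,\partial_x H$. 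Summing the zeroth- and first-order estimates, choosing $\sigma$ large and then $\delta_0,\chi,\epsilon_0$ small so that every error is either absorbed into the left-hand dissipation or bounded by $C(\chi\epsilon_\delta+\|(\phi_0,\psi_0,\zeta_0)\|_1^2)$, yields \eqref{eq91}; combined with \cref{pro151} and a standard continuation argument this gives the global solution.
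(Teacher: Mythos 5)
Your proposal matches the paper's proof in all essentials: the same relative-entropy basic estimate with $\Phi(s)=s-1-\ln s$, the same splitting of $F,G,H$ by decay rate combined with $L^1$--$L^\infty$ interpolation, the heat-kernel inequality of \cref{lemm9} applied to the Gaussian-weighted contact-wave terms, a Kanel'-type cross estimate (multiplying the momentum equation by $\hat v_x/\hat v$ with $\hat v=v/\bar v$) to recover $\int_0^T\|\phi_x\|^2\,ds$, and a second-order estimate tested against $(-\psi_{xx},-\zeta_{xx})$. The one detail that would not survive as written is ``choosing $\sigma$ large'': dominating $\delta(1+s)^{-1}e^{-C_2x^2/(1+s)}$ by $\delta w^2$ forces $2\sigma\le C_2$ (the paper takes $\sigma\in(0,C_2/4]$ in \cref{lemm11}), so the borrowed dissipation $C\sigma^{-1}\int_0^T\|\phi_x\|^2\,ds$ is absorbed not by largeness of $\sigma$ but by the small prefactor $\delta$ multiplying the entire weighted estimate \eqref{eq54}; with that correction your argument is the paper's.
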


Proposition \ref{pro15} is a direct consequence of the following lemmas. The following key Lemma is established by basic energy estimates.
\begin{Lem}\label{lemm15}
 Assume $(\phi,\psi,\zeta)\in X_{\frac{1}{2}\underline\iota^o,\chi}(I)$ satisfying 
\eqref{eq32} with suitably small $\chi$ and $\delta_0$, 
then for any $T>0$, $ \forall t\in[0,T] $, it holds
\begin{align}\label{eq209}
\begin{aligned}
&\|(\phi,\psi,\zeta)(t)\|^2+\int_0^t\|(\psi_x, \zeta_x)(s)\|^2ds
+\int_0^t\int_{\mathbb{R}}(u_{{r_1}x}+u_{{r_3}x})(\phi^2+\zeta^2)dxds\\
&\le C\|(\phi_0,\psi_0,\zeta_0)\|^2+(C
+C_{\beta_1})\chi\epsilon_\delta
+C\beta_1
\int_0^t\|\phi_x(s)\|^2ds,
\end{aligned}
\end{align}
where $\beta_1$ 
 is a small constant to be determined  later and $C_{\beta_1}$ 
 is a positive constant depending on $\beta_1$ . 
\end{Lem}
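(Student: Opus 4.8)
The plan is to run a relative-entropy energy estimate on the perturbation system \eqref{equ20}, following the framework of \cite{Huang2010}, while isolating the viscous-contact-wave contributions so they can be absorbed through the heat-kernel inequality of Lemma \ref{lemm9}. Concretely, I would introduce the convex function $\Phi(s)=s-1-\ln s\ge 0$ and the energy density
$$\mathcal{E}=\frac{1}{2}\psi^2+R\bar\theta\,\Phi\!\Big(\frac{v}{\bar v}\Big)+\frac{R}{\gamma-1}\bar\theta\,\Phi\!\Big(\frac{\theta}{\bar\theta}\Big),$$
which satisfies $\mathcal{E}\simeq \phi^2+\psi^2+\zeta^2$ so long as $(\phi,\psi,\zeta)$ stays small (guaranteed by $N(T)<\chi^2$ in \eqref{eq32}) and $v$ stays bounded below (guaranteed by the solution space $X_{\frac12\underline\iota^o,\chi}$). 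Multiplying the three equations of \eqref{equ20} by the natural multipliers, namely $R\bar\theta\big(\frac{1}{\bar v}-\frac{1}{v}\big)$ for the mass equation, $\psi$ for the momentum equation, and a $\zeta/\theta$-type factor for the energy equation, then adding and integrating over $\mathbb{R}$, I obtain an identity of the form
$$\frac{d}{dt}\int_{\mathbb{R}}\mathcal{E}\,dx+\mathcal{D}+\int_{\mathbb{R}}(u_{r_1x}+u_{r_3x})\,\Xi\,dx=\mathcal{R},$$
where $\mathcal{D}\gtrsim\int_{\mathbb{R}}(\psi_x^2+\zeta_x^2)\,dx$ is the viscous/heat-conduction dissipation, $\Xi$ is a quadratic form in $(\phi,\zeta)$, and $\mathcal{R}$ collects everything else.

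The first point is that the rarefaction contribution is favorable. Since $u_{r_ix}\ge 0$ by Lemma \ref{lemm2}(i), and $\Xi$ is positive definite for small $\delta$ (by the convexity of $\Phi$ together with the thermodynamic structure of the pressure), the third left-hand integral is bounded below by $c\int(u_{r_1x}+u_{r_3x})(\phi^2+\zeta^2)$, which is exactly the good term in \eqref{eq209}. The dissipation $\mathcal{D}$ follows from the lower bounds on $v$ and $\theta$ after integration by parts in the viscous and conductive terms. Note that \emph{no} direct dissipation of $\phi_x$ is produced by this scheme; this structural fact is precisely why the right-hand side of \eqref{eq209} is permitted to retain the term $C\beta_1\int_0^t\|\phi_x\|^2$, which is to be absorbed later via a separate estimate of $\phi_x$ obtained from the momentum equation.

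The remainder $\mathcal{R}$ splits into three types. The purely periodic pieces — $F$, $G_3$, and the $\epsilon_0 e^{-2\alpha t}$ parts of $G$ and $H$ controlled through Lemma \ref{lemm1} — decay exponentially in time in $L^1$ by \eqref{eq180} and \eqref{eq181}; pairing them against the $L^\infty$-bounded perturbations and using $N(T)<\chi^2$ bounds their time integral by $C\chi\epsilon_\delta$. The rarefaction-generated pieces $G_2,H_2,H_4$ obey the $(1+t)^{-7/8}$ and $(1+t)^{-1}$ bounds of \eqref{eq181} and \eqref{eq182}, hence are time-integrable and contribute at the same order after a Young/Hölder split, while the genuinely quadratic rarefaction terms of the form $\int (u_{r_ix})^2(\cdots)$ are absorbed by the good term above together with the smallness of $\delta$.

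The main obstacle is the viscous-contact-wave contribution: the terms carrying $Q_1,Q_2,(v^{cd})_x,(\theta^{cd})_x,(u^{cd})_x$ decay only at the self-similar heat-kernel rate $(1+t)^{-1/2}e^{-C_2x^2/(1+t)}$ and are therefore not time-integrable when naively paired with $(\phi,\psi,\zeta)$; moreover, unlike the rarefaction waves, the contact wave supplies no monotone good term in $\Omega_c$. The resolution is to bound these terms, after Cauchy–Schwarz, by $\int_0^t\int_{\mathbb{R}} h^2 w^2\,dxds$ with $h\in\{\phi,\psi,\zeta\}$ and $w$ the heat kernel of \eqref{eq47}, and then invoke Lemma \ref{lemm9}. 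This replaces each such term by $4\pi\|h(0)\|^2+4\pi\sigma^{-1}\int_0^t\|h_x\|^2+8\sigma\int_0^t\langle h_t,hg^2\rangle$; substituting $h_t$ from \eqref{equ20} into the final pairing reduces the $\sigma$-term to further dissipation integrals plus, for $h=\phi$ (whose time derivative is $\psi_x-F$), a contribution estimated by $\beta_1\int_0^t\|\phi_x\|^2+C_{\beta_1}(\cdots)$ through Young's inequality. Choosing $\sigma$ small relative to $\mu,\kappa$ lets the $\sigma^{-1}\int\|h_x\|^2$ pieces be absorbed into $\mathcal{D}$, and choosing $\delta,\chi,\epsilon_0$ small makes all prefactors of the heat-kernel terms small; collecting everything then yields \eqref{eq209} with the advertised dependence on $\beta_1$ and $C_{\beta_1}$. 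The delicate balance in the $\langle h_t,hg^2\rangle$ pairing — ensuring the induced $\|\psi_x\|^2$ and $\|\zeta_x\|^2$ can be absorbed while only $\|\phi_x\|^2$ is left over with a tunable coefficient — is where the bulk of the technical care lies.
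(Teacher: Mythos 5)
Your proposal is correct and follows essentially the same route as the paper: the paper proves this lemma by combining a relative-entropy energy estimate (Lemma \ref{lemm5}, using $\Phi(Z)=Z-1-\ln Z$, the multipliers $\psi$ and $1-\bar\theta/\theta$, and the monotonicity $u_{r_ix}\ge 0$) with the heat-kernel bound of Lemma \ref{lemm11} (built on Lemma \ref{lemm9}) to absorb the non-integrable contact-wave terms, the leftover $\|h_x\|^2$ contributions being controlled because the whole heat-kernel estimate enters with the small prefactor $C\delta$. Only minor caveat: absorption comes from that $\delta$-prefactor (which you do also invoke), not from taking $\sigma$ small, since shrinking $\sigma$ enlarges the $\sigma^{-1}\int\|h_x\|^2$ term in \eqref{eq50}.
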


Lemma \ref{lemm15} was derived directly from the following Lemmas by choosing $\epsilon_\delta$ 
suitably small.

\begin{Lem}\label{lemm5}
For $T>0$ and $(\phi,\psi,\zeta)\in X_{\frac{1}{2}\underline\iota^o,\chi}(I)$ satisfying 
 \eqref{eq32} with suitably small $\chi$, 
 then 
  for any $t\in[0,T]$, we have
\begin{align}\label{eq29}
\begin{aligned}
&\|(\phi,\psi,\zeta)(t)\|^2+\int_0^t\|(\psi_x, \zeta_x)(s)\|^2ds
+\int_0^t\int_{\mathbb{R}}(u_{{r_1}x}+u_{{r_3}x})(\phi^2+\zeta^2)dxds\\
&\le C\|(\phi_0,\psi_0,\zeta_0)\|^2+(C
+C_{\beta_1})\chi\epsilon_\delta
+C\beta_1
\int_0^t\|\phi_x(s)\|^2ds\\
&\qquad+
C\int_0^t\int_{\mathbb{R}}\delta(1+s)^{-1}e^{-\frac{C_2x^2}{1+s}}(\phi^2+\zeta^2)dxds,
\end{aligned}
\end{align}
where $\beta_1$ 
 is a small constant to be determined  later and $C_{\beta_1}$ 
 is a positive constant depending on $\beta_1$. 
\end{Lem}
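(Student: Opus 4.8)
The plan is to derive \eqref{eq29} by a relative-entropy (Kanel'-type) energy estimate for the perturbation system \eqref{equ20}, following the framework of \cite{Huang2010}. Introduce the nonnegative convex function $\Phi(z)=z-1-\ln z$, which vanishes only at $z=1$ and satisfies $\Phi''>0$, and define the energy density
$$
\mathcal{E}:=\frac{\psi^2}{2}+R\bar\theta\,\Phi\!\Big(\frac{v}{\bar v}\Big)+\frac{R}{\gamma-1}\bar\theta\,\Phi\!\Big(\frac{\theta}{\bar\theta}\Big).
$$
Since $\delta$ and $\chi$ are small and the profiles are bounded away from vacuum, $\mathcal{E}$ is pointwise equivalent to $\phi^2+\psi^2+\zeta^2$, so $\int_{\mathbb R}\mathcal{E}\,dx\simeq\|(\phi,\psi,\zeta)\|^2$. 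First I would multiply the $\psi$-equation of \eqref{equ20} by $\psi$, the temperature equation by $1-\bar\theta/\theta$, and the mass equation by the relative pressure $\bar p\,(1-\bar v/v)$; adding the three and integrating over $\mathbb{R}$ reproduces $\frac{d}{dt}\int_{\mathbb R}\mathcal{E}\,dx$ up to lower-order terms, and after several integrations by parts yields an identity of the schematic form
$$
\frac{d}{dt}\int_{\mathbb R}\mathcal{E}\,dx+\int_{\mathbb R}\Big(\frac{\mu\bar\theta}{v\theta}\psi_x^2+\frac{\kappa\bar\theta}{v\theta^2}\zeta_x^2\Big)dx=\mathcal R_{\mathrm{rare}}+\mathcal R_{\mathrm{cd}}+\mathcal R_{\mathrm{err}},
$$
where $\mathcal R_{\mathrm{rare}}$ collects the terms carrying the rarefaction derivatives $u_{{r_1}x},u_{{r_3}x}$, $\mathcal R_{\mathrm{cd}}$ those carrying the contact-wave weight, and $\mathcal R_{\mathrm{err}}$ the contributions of the ansatz errors $F,G,H$.

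The key structural point is that, by the monotonicity $u_{{r_i}x}\ge 0$ from \cref{lemm2}(i), the leading quadratic part of $\mathcal R_{\mathrm{rare}}$ obtained from expanding the pressure-work terms has a favourable sign and can be moved to the left-hand side, producing exactly the third term $\int_0^t\!\int_{\mathbb R}(u_{{r_1}x}+u_{{r_3}x})(\phi^2+\zeta^2)$ of \eqref{eq29}. The remaining interaction terms among the three wave patterns are of size $\delta e^{-c_0(|x|+t)}$ by \eqref{eq24}--\eqref{25} and are therefore time-integrable, contributing harmlessly. The contact-wave part $\mathcal R_{\mathrm{cd}}$ cannot be given a definite sign: using \eqref{25} together with the bounds \eqref{Q1}--\eqref{Q2} on $Q_1,Q_2$ it is controlled by $C\int_0^t\!\int_{\mathbb R}\delta(1+s)^{-1}e^{-C_2x^2/(1+s)}(\phi^2+\zeta^2)\,dx\,ds$, which is precisely the last term retained on the right of \eqref{eq29}, to be absorbed afterwards via the heat-kernel inequality of \cref{lemm9}. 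For $\mathcal R_{\mathrm{err}}$ I would invoke the pointwise bounds \eqref{equ180}--\eqref{equ182} and the $L^1$ bounds \eqref{eq180}--\eqref{eq182}, combined with the Sobolev embedding $\|(\phi,\psi,\zeta)\|_{L^\infty}\le C\|(\phi,\psi,\zeta)\|_1\le C\chi$ and the exponential factor $e^{-2\alpha t}$; the pieces decaying like $(1+t)^{-7/8}$ are time-integrable and, after Young's inequality, produce the $(C+C_{\beta_1})\chi\epsilon_\delta$ contribution.

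The main obstacle is the \emph{absence of any dissipation term for} $\phi$ in the basic energy identity. Several cross terms generated by $(p-\bar p)_x$ and by the rewriting of the viscous flux $\frac{u_x}{v}-\frac{\bar u_x}{\bar v}$ carry a factor $\phi_x$ with no matching good term on the left. I would handle these by Young's inequality with a small weight $\beta_1$, sending the quadratic of the companion factor into the velocity dissipation $\int\psi_x^2$ already present and leaving a remainder $C\beta_1\int_0^t\|\phi_x\|^2$, which is exactly the $\beta_1$-term in \eqref{eq29}; this $\phi_x$-contribution is deferred to (and absorbed by) the higher-order estimate carried out later. Finally, integrating the differential inequality in $t$, using $\int_{\mathbb R}\mathcal E(\cdot,0)\,dx\le C\|(\phi_0,\psi_0,\zeta_0)\|^2$ and the equivalence of $\mathcal E$ with $|(\phi,\psi,\zeta)|^2$, gives \eqref{eq29}. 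The delicate bookkeeping lies entirely in sorting each non-signed term into one of the four buckets: absorbed into $\int(\psi_x,\zeta_x)^2$, bounded by $\beta_1\|\phi_x\|^2$, bounded by the contact-wave heat kernel, or estimated by $\chi\epsilon_\delta$.
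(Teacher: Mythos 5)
Your proposal follows essentially the same route as the paper's proof: the same relative-entropy functional $\frac{\psi^2}{2}+R\bar\theta\,\Phi(\tfrac{v}{\bar v})+\frac{R}{\gamma-1}\bar\theta\,\Phi(\tfrac{\theta}{\bar\theta})$ obtained from the same multipliers, the same use of the monotonicity $u_{r_ix}\ge 0$ to extract the signed term $\int\!\!\int(u_{r_1x}+u_{r_3x})(\phi^2+\zeta^2)$, the same heat-kernel bucket $C\delta(1+s)^{-1}e^{-C_2x^2/(1+s)}(\phi^2+\zeta^2)$ for the contact-wave weights, and the same Young/Sobolev treatment of the error terms $F,G,H$ yielding the $(C+C_{\beta_1})\chi\epsilon_\delta$ contribution. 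The only cosmetic discrepancy is your attribution of the $C\beta_1\int_0^t\|\phi_x\|^2$ term to cross terms from $(p-\bar p)_x$ and the viscous flux, whereas in the paper it arises from interpolating $\|\phi\|_{L^\infty}^2\le\sqrt{2}\,\|\phi\|\,\|\phi_x\|$ when estimating the $F$-, $G$-, $H$-terms; this does not affect the structure or validity of the estimate.
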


\begin{proof}
Let $$\Phi(Z)=Z-1-\ln Z,\ \ \Phi(1)=\Phi'(1)=0.$$
Taking $\eqref{equ20}_2\times\psi$ and $\eqref{equ20}_3\times(1-\frac{\bar\theta}{\theta})$, we have
\begin{align}\label{eq30}
\begin{aligned}
&\big\{\frac{\psi^2}{2}+\frac{R}{\gamma-1}\bar\theta\Phi(\frac{\theta}{\bar\theta})+R\bar\theta\Phi(\frac{v}{\bar v})\big\}_t+\bar p(u_{{r_1}x}+u_{{r_3}x})
\big(\Phi(\frac{\theta\bar v}{v\bar\theta})+\gamma\Phi(\frac{v}{\bar v})\big) \\
&\qquad \ \ \ +\mu\frac{\psi_x^2}{v}+\kappa\frac{\zeta_x^2}{v\theta}\\
&=\Big\{(\gamma-1)\big[\kappa(\frac{\bar\theta_x}{\bar v})_x+\mu\frac{(\bar u_x)^2}{\bar v}+H\big]\big[\Phi(\frac{v}{\bar v})-\frac{1}{\gamma-1}\Phi(\frac{\bar\theta}{\theta})\big]-\bar p(\triangle_1\\
&\qquad +u^{cd}_{x})
\big(\Phi(\frac{\theta\bar v}{v\bar\theta})+\gamma\Phi(\frac{v}{\bar v})\big)\Big\}-\big[\mu\psi_x\bar u_x(\frac{1}{v}-\frac{1}{\bar v})-\mu\frac{\zeta}{\theta}(\frac{u_x^2}{v}-\frac{\bar u_x^2}{\bar v})\big]\\
&\qquad-\big[\kappa\frac{\zeta_x\bar\theta_x}{\theta}(\frac{1}{v}-\frac{1}{\bar v})-\kappa\frac{\zeta\theta_x}{\theta^2}(\frac{\theta_x}{v}-\frac{\bar\theta_x}{\bar v})\big]-\frac{\bar p\phi^2}{v\bar v}F+R\bar\theta F(\frac{1}{v}-\frac{1}{\bar v})-G\psi\\
&\qquad -\frac{\zeta}{\theta}H+\big\{\mu\psi(\frac{u_x}{v}-\frac{\bar u_x}{\bar v})-(p-\bar p)\psi+\kappa\frac{\zeta_x}{v}+(\frac{1}{v}-\frac{1}{\bar v})\bar\theta_x\big\}_x\\
&=J_{11}+J_{12}+J_{13}-\frac{\bar p\phi^2}{v\bar v}F+R\bar\theta F(\frac{1}{v}-\frac{1}{\bar v})-G\psi-\frac{\zeta}{\theta}H+J_{14},
\end{aligned}
\end{align}
where
\begin{align}\label{eq31}
\begin{aligned}
\triangle_1=\eta_x(\widetilde u_+ -\widetilde u_-)+(1-\eta)\widetilde u_{-x}
+\eta \widetilde u_{+x}\le C\epsilon_0e^{-2\alpha t}.
\end{aligned}
\end{align}
It is easy to check that
\begin{align}\label{eq34}
\begin{aligned}
J_{11}\le C&(\triangle_1+u^{cd}_{x})
(\phi^2+\zeta^2)+C\big[
\omega_{r_1x}^2+
\omega_{r_3x}^2+(\omega^{cd}_{x})^2+\theta^{cd}_{xx}+\theta_{{r_1}xx}\\
&+\theta_{{r_3}xx}\big]
(\phi^2+\zeta^2)
+C|H|(\phi^2+\zeta^2),
\end{aligned}
\end{align}
\begin{align}\label{eq33}
\begin{aligned}
J_{12}&\le\mu\frac{\psi_x^2}{4v}+\mu\frac{C}{v}(\bar u_x\frac{\phi}{\bar v})^2+\mu\frac{\zeta}{\theta}(\bar u_x^2\frac{-\phi}{v\bar v}+\frac{2\bar u_x\psi_x}{v}+\frac{\psi_x^2}{v})\\
&\le\mu\frac{\psi_x^2}{2v}+C\big[\epsilon_0e^{-2\alpha t}
+(u_{{r_1}x})^2+(u_{{r_3}x})^2+(u^{cd}_{x})^2\big](\phi^2+\zeta^2)+C|\zeta|\psi_x^2,
\end{aligned}
\end{align}
\begin{align}\label{eq35}
\begin{aligned}
J_{13}&\le\kappa\frac{\zeta_x\bar\theta_x\phi}{\theta v\bar v}+\kappa(-\frac{\phi}{v\bar v})\frac{\zeta\bar\theta_x^2}{\theta^2}+2\kappa\frac{\zeta_x\zeta\bar\theta_x}{\theta v}+\kappa\frac{\zeta\zeta_x^2}{\theta^2 v}-\kappa\frac{\zeta_x\zeta\bar\theta_x}{\theta^2\bar v}-\frac{\zeta\bar\theta_x^2}{\bar v\theta^2}+\frac{\zeta\bar\theta_x^2}{v\theta^2}\\
&\le\frac{1}{4}\kappa\frac{\zeta^2_x}{\theta v}+C\bar\theta_x^2(\phi^2+\zeta^2)+C|\zeta|\zeta_x^2+\beta\zeta_x^2+C_\beta\bar\theta_x^2\zeta^2\\
&\le\frac{1}{4}\kappa\frac{\zeta^2_x}{\theta v}+(C+C_\beta)[\epsilon_0e^{-2\alpha t}
+\theta_{{r_1}x}^2+\theta_{{r_3}x}^2+(\theta^{cd}_x)^2](\phi^2+\zeta^2)+C|\zeta|\zeta_x^2+\beta\zeta_x^2,
\end{aligned}
\end{align}
where $\beta$ 
 is a small constant to be determined  later, $C_{\beta}:=C(\beta)$ is a positive constant depending on $\beta$,  
and $\omega_{r_1x}=(v_{{r_1}x}, u_{{r_1}x}, \theta_{{r_1}x}),\ \omega_{r_3x}=(v_{{r_3}x}, u_{{r_3}x}, \theta_{{r_3}x}),\ \omega^{cd}_{x}=(v^{cd}_{x}, u^{cd}_{x}, \theta^{cd}_{x}).$ 
Using
\begin{align}\label{eq43}
\|(\phi, \psi, \zeta)\|_{L^\infty}\le \sqrt{2}\|(\phi, \psi, \zeta)\|^{\frac{1}{2}}\|(\phi_x, \psi_x, \zeta_x)\|^{\frac{1}{2}}, 
\end{align}
and integrating \cref{eq30} over $[0, t] \times \mathbb{R}$, we have
\begin{align}\label{eq37}
\begin{aligned}
&\|(\phi,\psi,\zeta)(\cdot, t)\|^2+\int_0^t\int_{\mathbb{R}}\mu\frac{\psi_x^2}{v}+\kappa\frac{\zeta^2_x}{\theta v}dxds+\int_0^t\int_{\mathbb{R}}(u_{{r_1}x}+u_{{r_3}x})(\phi^2+\zeta^2)dxds\\
&\le\|(\phi_0,\psi_0,\zeta_0)\|^2+\beta\int_0^t\int_{\mathbb{R}}\zeta^2_xdxds+C\int_0^t\int_{\mathbb{R}}|\zeta|(\psi_x^2+\zeta_x^2)dxds\\
&-\int_0^t\int_{\mathbb{R}}\frac{\bar p\phi^2}{v\bar v}F+R\bar\theta F\frac{\phi}{v\bar v}+G\psi +\frac{\zeta}{\theta}Hdxds+(C+C_\beta)\int_0^t\int_{\mathbb{R}}\big[\epsilon_0e^{-2\alpha t}+\omega_{r_1x}^2\\
&+\omega_{r_3x}^2+|u_x^{cd}|^2
+|\theta_{{r_1}xx} |+|\theta_{{r_3}xx}|\big]
(\phi^2+\zeta^2)dxds+(C+C_\beta)\int_0^t\int_{\mathbb{R}}(|u_x^{cd}|\\
&+|\theta_x^{cd}|^2+|v_x^{cd}|^2+|\theta^{cd}_{xx}|)
(\phi^2+\zeta^2)dxds\\
&\le\|(\phi_0,\psi_0,\zeta_0)\|^2+(\beta+C\chi
)\int_0^t\int_{\mathbb{R}}(\psi_x^2+\zeta_x^2)dxds+\sum_{i=1}^3\tilde J_i.
\end{aligned}
\end{align}
Then by \eqref{eq180}-\eqref{eq182}, \eqref{eq32} and \eqref{eq43}, we obtain that
\begin{align}\label{eq45}
\begin{aligned}
|\tilde J_1|&=|\int_0^t\int_{\mathbb{R}}\frac{\bar p\phi^2}{v\bar v}F+R\bar\theta F\frac{\phi}{v\bar v}+G\psi +\frac{\zeta}{\theta}Hdxds|\\
&\le C\int_0^t\epsilon_\delta
\big[\|\phi\|_{L^\infty}^2+\|(\phi,\psi,\zeta)(s)\|_{L^\infty}
\big]e^{-\hat Cs}
+\|\psi\|^{\frac{1}{2}}\|\psi_x\|^{\frac{1}{2}}\|G_2\|_{L^1}\\
&\quad+\|\zeta\|^{\frac{1}{2}}\|\zeta_x\|^{\frac{1}{2}}\|(H_2,H_4)\|_{L^1}ds\\
&\le (C+C_\beta+C_{\beta_1}
)\chi\epsilon_\delta+\int_0^t\beta_1
\|\phi_x\|^2+\beta\|(\psi_x,\zeta_x)\|^2ds\\
&\quad+C_\beta\int_0^t\|\psi\|^{\frac{2}{3}}\|G_2\|_{L^1}^{\frac{4}{3}}+\|\zeta\|^{\frac{2}{3}}\|(H_2,H_4)\|_{L^1}^{\frac{4}{3}}ds\\
&\le (C+C_\beta+C_{\beta_1})
\chi\epsilon_\delta+\int_0^t\beta_1
\|\phi_x\|^2+\beta\|(\psi_x, \zeta_x)\|^2ds,
\end{aligned}
\end{align}
where $\beta_1$ 
 is a small constant 
  and $C_{\beta_1}:=C(\beta_1)$ is a positive constant depending on $\beta_1$. 
From the Lemma \ref{lemm2} and \eqref{equ6}, 
we get
\begin{align}\label{eq40}
\begin{aligned}
&\|(u_x^{cd})^2\|_{L^1}\le C\delta(1+t)^{-\frac{3}{2}}, 
\\ &
\|(\omega_{r_1x}^2, \omega_{r_3x}^2,u_{{r_1}xx}, u_{{r_3}xx}, \theta_{{r_1}xx}, \theta_{{r_3}xx})\|_{L^1}\le C\delta^{\frac{1}{8}}(1+t)^{-\frac{7}{8}},
\end{aligned}
\end{align}
then
\begin{align}\label{equ40}
\begin{aligned}
|\tilde J_2|&\le \int_0^t \beta_1\|\phi_x\|^2+ \beta\|\zeta_x\|^2ds+(C_\beta+C_{\beta_1})\delta^{\frac{1}{4}}\|(\phi,\zeta)\|^2\int_0^t
(1+t)^{-\frac{7}{4}}ds\\
&\le
\int_0^t \beta_1\|\phi_x\|^2+ \beta\|\zeta_x\|^2
ds+(C_\beta+C_{\beta_1})\chi\delta^{\frac{1}{4}}.
\end{aligned}
\end{align}
From \eqref{equ6}, we obtain
\begin{align}\label{eq46}
\begin{aligned}
|\tilde J_3|&=(C+C_\beta)\int_0^t\int_{\mathbb{R}}(|u_x^{cd}|+|\theta_x^{cd}|^2+|v_x^{cd}|^2+|\theta^{cd}_{xx}|)
(\phi^2+\zeta^2)dxds\\
&\le (C+C_\beta)\int_0^t\int_{\mathbb{R}}\delta(1+s)^{-1}e^{-\frac{C_2x^2}{1+s}}(\phi^2+\zeta^2)dxds.
\end{aligned}
\end{align}
Choosing $\beta$ suitably small, we get \eqref{eq29}
 from \eqref{eq37}-\eqref{eq46}.
\end{proof}
In order to finish the proof of Lemma \ref{lemm15}, we have to estimate the last term in \eqref{eq29} based on the Lemma \ref{lemm9} as follows.

\begin{Lem}\label{lemm11}
Assume that positive constant $\sigma\in(0, \frac{C_2}{4}]$ and $w$ defined in \eqref{eq47}, then there holds
\begin{align}\label{eq54}
\begin{aligned}
&\int_0^t\int_{\mathbb{R}}\big(\phi^2+\psi^2+\zeta^2\big)w^2dxds\\
&\le C+C\int_0^t\big(\|\phi_x\|^2+\|\psi_x\|^2+\|\zeta_x\|\big)ds+C\int_0^t\int_{\mathbb{R}}(\phi^2+\zeta^2)(u_{r_1x}+u_{r_3x})dxds,
\end{aligned}
\end{align}
where $C$ is some positive constant depending on $\sigma$, $\chi$ and $\epsilon_\delta$.
\end{Lem}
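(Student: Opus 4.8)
The plan is to apply the heat-kernel inequality of Lemma \ref{lemm9} separately to $h=\phi$, $h=\psi$ and $h=\zeta$, and then add the three resulting estimates; the left-hand sides combine to give exactly $\int_0^t\int_{\mathbb{R}}(\phi^2+\psi^2+\zeta^2)w^2$. On each right-hand side the first two groups of terms are already of the form claimed in \eqref{eq54}: the initial terms $\|\phi(0)\|^2+\|\psi(0)\|^2+\|\zeta(0)\|^2$ are bounded by an absolute constant, since $\phi_0=\psi_0=0$ and $\zeta_0=-\frac{\gamma-1}{R}\phi_2\,u^{cd}(\cdot,0)\in L^2$ by \eqref{equa20}; and the dissipation terms $4\pi\sigma^{-1}\int_0^t\|h_x\|^2$ produce a $C\int_0^t(\|\phi_x\|^2+\|\psi_x\|^2+\|\zeta_x\|^2)\,ds$ bound, matching the derivative group in \eqref{eq54}. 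Thus everything reduces to estimating the coupling terms $8\sigma\int_0^t\langle h_t,hg^2\rangle\,ds$.

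For each component I would substitute the corresponding equation of \eqref{equ20} for $h_t$ and integrate by parts in $x$, using $(g^2)_x=2gw$ and $\|g\|_{L^\infty}=\sqrt\pi\,\sigma^{-1/2}$ from \eqref{eq48}. For $\psi$ and $\zeta$ the viscous and heat-conduction terms $\mu(\tfrac{u_x}v-\tfrac{\bar u_x}{\bar v})_x$ and $\kappa(\tfrac{\theta_x}v-\tfrac{\bar\theta_x}{\bar v})_x$ are the helpful ones: after integration by parts their principal parts pair $h_x g$ with $h w$, and Young's inequality splits each such product into a bounded multiple of $\int h_x^2 g^2\le\pi\sigma^{-1}\|h_x\|^2$ and an arbitrarily small multiple of $\int h^2 w^2$, the latter to be absorbed on the left. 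The pressure and convection terms, once the background derivatives are isolated, generate contributions carrying the factors $u_{r_1x}+u_{r_3x}$ — giving the weighted group $C\int_0^t\int(\phi^2+\zeta^2)(u_{r_1x}+u_{r_3x})$ on the right of \eqref{eq54} — together with $u^{cd}_x$-weighted contributions which, being Gaussian-localized and carrying the small factor $\delta$, are controlled. Finally the source terms $F$, $G$, $H$ are paired with $hg^2$ and estimated through \eqref{eq180}, \eqref{eq181}, \eqref{eq182} and the a priori bound \eqref{eq32}, contributing a remainder that is small in $\chi\epsilon_\delta$ and hence absorbed into the constant $C$.

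The hard part will be the contribution of the continuity equation, $8\sigma\int_0^t\langle\psi_x-F,\phi g^2\rangle\,ds$, because $\phi$ carries no dissipation and the weight $g$ only remains bounded (it does not decay), so the pairing device used for $\psi$ and $\zeta$ is unavailable. Here I would integrate by parts to produce $-8\sigma\int\psi\phi_x g^2-16\sigma\int\psi\phi\,gw$ and handle these using the identity $4\sigma g_t=w_x$ from \eqref{eq48} to trade the $gw$-weighted product for genuinely $w^2$-weighted quantities, while routing the remaining factors into $\int_0^t\|\phi_x\|^2$ and a small multiple of $\int_0^t\int(\phi^2+\psi^2)w^2$ to be absorbed on the left, keeping all constants dependent only on $\sigma$, $\chi$ and $\epsilon_\delta$. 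Once all contributions are collected, choosing $\sigma\in(0,\tfrac{C_2}4]$ so that $w^2$ dominates the Gaussian weights entering the coefficients and taking $\chi$, $\delta$, $\epsilon_\delta$ small lets me move the accumulated $\int h^2 w^2$ terms to the left, which yields \eqref{eq54}.
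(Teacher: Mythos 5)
The paper does not actually prove this lemma: it is dispatched with the single sentence that the proof ``is similar to the Lemma 5 in \cite{Huang2010}, so we omit it''. So your proposal has to be measured against what a complete proof would require. Your overall strategy (apply Lemma \ref{lemm9} to $h=\phi,\psi,\zeta$, sum, and estimate the coupling terms $8\sigma\int_0^t\langle h_t,hg^2\rangle\,ds$ from \eqref{equ20}) is indeed the strategy of \cite{Huang2010}, and your treatment of the initial data, of the dissipation terms, and of the viscous and heat-conduction fluxes (pairing $h_xg$ against $hw$ and splitting by Young into $\pi\sigma^{-1}\|h_x\|^2$ plus an absorbable multiple of $\int h^2w^2$) is fine. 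The gap is in the hyperbolic coupling terms, and it is not only the mass-equation term you flag. From the momentum equation, $\langle-(p-\bar p)_x,\psi g^2\rangle$ contains, once the profile derivatives $\bar v_x,\bar\theta_x$ are isolated, the principal part $-\int\big(\tfrac{R\zeta_x}{v}-\tfrac{p\phi_x}{v}\big)\psi g^2\,dx$; from the energy equation, $pu_x-\bar p\bar u_x=p\psi_x+(p-\bar p)\bar u_x$ contributes $-\int p\,\psi_x\zeta g^2\,dx$; from the mass equation one gets $\int\psi_x\phi g^2\,dx$. None of these carries a factor $u_{r_1x}+u_{r_3x}$, a Gaussian weight, or a small parameter, so your claim that the pressure and convection terms ``generate contributions carrying the factors $u_{r_1x}+u_{r_3x}$'' is false for their principal parts. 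And no term-by-term Young splitting closes them: for instance $|\int(p-\bar p)\psi_xg^2\,dx|\le\|g\|_{L^\infty}^2\|(\phi,\zeta)\|\,\|\psi_x\|\le C\chi\|\psi_x\|$, whose time integral grows like $\chi\big(t\int_0^t\|\psi_x\|^2ds\big)^{1/2}$, while splitting the other way produces $\int_0^t\|(\phi,\zeta)\|^2ds$ or $\int_0^t\|\psi\|^2ds$ --- quantities that appear nowhere on the right of \eqref{eq54} and are not controlled anywhere in the scheme.

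The same objection defeats your proposed fix for the mass-equation term: after writing $\int\psi_x\phi g^2=-\int\psi\phi_xg^2-2\int\psi\phi gw$, the first piece costs $\|g\|_{L^\infty}^2\|\psi\|\|\phi_x\|$, whose complementary Young factor is again $\int_0^t\|\psi\|^2ds$ rather than $\int_0^t\|\phi_x\|^2ds$ alone; and since $0\le w\le1$ gives $w^2\le w$, there is no legitimate way to ``trade the $gw$-weighted product for genuinely $w^2$-weighted quantities'' --- the inequality goes the wrong way, and the identity $4\sigma g_t=w_x$ does not produce such a trade. What a correct proof must do (and what the computation in \cite{Huang2010} is organized to achieve) is combine the three coupling terms so that these $O(1)$-coefficient, spatially unlocalized parts cancel against one another, exploiting the symmetrizable structure of the Euler part; only the residuals, whose coefficients are profile derivatives, then survive, and those are either Gaussian-localized (absorbed into the left-hand side) or dominated by the $(u_{r_1x}+u_{r_3x})$-weighted term. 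That cancellation is the entire content of the lemma and is absent from your proposal, so as written the argument does not close. (A minor point on the statement itself: the term $\|\zeta_x\|$ in \eqref{eq54} should read $\|\zeta_x\|^2$.)
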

The proof of this Lemma \ref{lemm11} is similar to the Lemma 5 in \cite{Huang2010}, so we omit it for simplicity.
Multiplying \eqref{eq54} by 
$C\delta$ ($<C\delta_0$), 
we quickly obtain the Lemma \ref{lemm15}.

\begin{Lem}\label{lemm6}
Assume that $(\phi,\psi,\zeta)\in X_{\frac{1}{2}\underline\iota^o,\chi}(I)$ satisfying 
\eqref{eq32} with suitably small $\chi$, then for any $T>0$, there holds
\begin{align}\label{eq38}
\begin{aligned}
\|\phi_x(t)\|^2+\int_0^t\|\phi_x(s)\|^2ds\le C\big(\chi\epsilon_\delta
+\|\psi_0,\zeta_0\|^2+\|\phi_0\|_1^2\big),\ \  \forall t\in[0,T].
\end{aligned}
\end{align}
\end{Lem}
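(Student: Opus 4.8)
The plan is to derive an evolution equation for $\|\phi_x\|^2$ by reformulating the momentum equation $\eqref{equ20}_2$ so that the viscous term carries the dissipation of $\phi_x$ rather than $\psi_x$. The starting point is an algebraic identity obtained from the two continuity relations: since $v_t=u_x$ and $\bar v_t-\bar u_x=F$, one has
\begin{align*}
\frac{u_x}{v}-\frac{\bar u_x}{\bar v}=\big(\ln(v/\bar v)\big)_t+\frac{F}{\bar v}=\Big(\frac{\phi_x}{v}-\frac{\bar v_x\phi}{v\bar v}\Big)_t+\frac{F}{\bar v}.
\end{align*}
Substituting this into $\eqref{equ20}_2$ turns the momentum equation into
\begin{align*}
\Big[\psi-\mu\frac{\phi_x}{v}+\mu\frac{\bar v_x\phi}{v\bar v}\Big]_t=-(p-\bar p)_x+\mu\Big(\frac{F}{\bar v}\Big)_x-G,
\end{align*}
which now explicitly exhibits $\mu\phi_x/v$ under a time derivative, exactly the quantity we wish to estimate.

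Next I would multiply this reformulated equation by $-\phi_x/v$ and integrate over $\R$. The term $\mu(\phi_x/v)_t\cdot(\phi_x/v)$ produces the good energy $\tfrac{\mu}{2}\frac{d}{dt}\int \phi_x^2/v^2\,dx$, while the pressure gradient, using $p-\bar p=\frac{R\zeta}{v}-\frac{R\bar\theta}{v\bar v}\phi$, yields the positive dissipation $\int\frac{R\bar\theta}{v^2\bar v}\phi_x^2\,dx\gtrsim\|\phi_x\|^2$ up to lower order; the residual piece $\int(\frac{R\zeta}{v})_x\frac{\phi_x}{v}\,dx$ is split by Young's inequality into a small multiple of $\phi_x^2$ (absorbed by the good dissipation) plus $C\zeta_x^2$ (controlled in time by the dissipation of Lemma \ref{lemm5}). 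The interaction term $-\int\psi_t\frac{\phi_x}{v}\,dx$ is rewritten as $-\frac{d}{dt}\int\psi\frac{\phi_x}{v}\,dx+\int\psi(\frac{\phi_x}{v})_t\,dx$, and $\phi_{xt}=(\psi_x-F)_x$ is used to trade $\phi_{xt}$ for $\psi_{xx}$; integrating by parts then gives $+\int\frac{\psi_x^2}{v}\,dx$ plus lower order, and $\int_0^t\|\psi_x\|^2ds$ is again supplied by Lemma \ref{lemm5}. Collecting these yields
\begin{align*}
\frac{d}{dt}\int\Big(\frac{\mu\phi_x^2}{2v^2}-\psi\frac{\phi_x}{v}\Big)dx+c\int\phi_x^2\,dx\le C\|(\psi_x,\zeta_x)\|^2+(\text{error terms}),
\end{align*}
and since $|\psi\frac{\phi_x}{v}|\le\frac{\mu}{4v^2}\phi_x^2+C\psi^2$, the functional on the left is bounded below by $c\|\phi_x\|^2-C\|\psi\|^2$.

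Finally I would integrate in time and dispose of the error terms. Those generated by $F$ and $G$ are handled with the $L^1$ bounds \eqref{eq180}--\eqref{eq182} together with the interpolation \eqref{eq43}; those from the derivatives of the contact and rarefaction profiles are controlled by Lemma \ref{lemm2}, \eqref{equ6}, and the exponential decay of the periodic corrections from Lemma \ref{lemm1}; the Gaussian-weighted contributions $\int_0^t\!\int_\R\delta(1+s)^{-1}e^{-C_2x^2/(1+s)}\phi_x^2$ are absorbed through the heat-kernel estimate of Lemma \ref{lemm11}. Combining with the $L^2$-level control of $\|\psi\|^2$, $\|\zeta\|^2$, $\int_0^t\|(\psi_x,\zeta_x)\|^2ds$ and $\int_0^t\!\int(\phi^2+\zeta^2)(u_{r_1x}+u_{r_3x})$ from Lemma \ref{lemm5}, and choosing $\chi,\delta_0,\epsilon_0$ and the Young parameters small, everything is absorbed except the asserted right-hand side $C(\chi\epsilon_\delta+\|\psi_0,\zeta_0\|^2+\|\phi_0\|_1^2)$, the initial value of the functional contributing $\|\phi_0\|_1^2$ and $\|\psi_0\|^2$. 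The main obstacle is the interaction term $\int\psi_t\frac{\phi_x}{v}\,dx$: it must be turned into a time derivative plus a multiple of the already-available dissipation $\int\psi_x^2$ without reintroducing uncontrolled $\phi_x$ or $\phi_{xx}$, and it is precisely here that the continuity-equation reformulation and the careful bookkeeping of the $\beta_1\int_0^t\|\phi_x\|^2$ term, to be absorbed when Lemma \ref{lemm15} and this lemma are merged into Proposition \ref{pro15}, are essential.
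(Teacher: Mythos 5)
Your proposal is correct and follows essentially the same route as the paper: the identity $\frac{u_x}{v}-\frac{\bar u_x}{\bar v}=\big(\ln(v/\bar v)\big)_t+\frac{F}{\bar v}$ is exactly the paper's reformulation \cref{eq73,eq74} with $\frac{\hat v_x}{\hat v}=\frac{\phi_x}{v}-\frac{\bar v_x\phi}{v\bar v}$, the test function $-\phi_x/v$ differs from the paper's $\frac{\hat v_x}{\hat v}$ only by a harmless lower-order correction, and the cross term $\int\psi_t\frac{\hat v_x}{\hat v}\,dx$, the pressure dissipation $\int\frac{R\theta}{v}\big(\frac{\hat v_x}{\hat v}\big)^2dx$, and the closing via Lemma \ref{lemm5} and the heat-kernel estimate are all handled as in \cref{eq75,eq76,eq77}. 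The only slip is cosmetic: the Gaussian-weighted error in \cref{eq75} carries $\phi^2$ rather than $\phi_x^2$ (the latter being absorbed directly into the dissipation), which does not affect the argument.
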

\begin{proof} Let $\hat v=\frac{v}{\bar v}$, then
\begin{align}\label{eq73}
\begin{aligned}
\frac{\hat v_x}{\hat v}=\frac{\phi_x}{v}&-\frac{\bar v_x\phi}{v\bar v},\ \
\big(\frac{\hat v_t}{\hat v}\big)_x=\big(\frac{\hat v_x}{\hat v}\big)_t, \ \ \big(\frac{u_x}{v}-\frac{\bar u_x}{\bar v}\big)_x=\big(\frac{\hat v_t}{\hat v}+\frac{F}{\bar v}\big)_x,\\ &\big(p-\bar p\big)_x=R\frac{\zeta_x}{v}-R\frac{\bar\theta_x\phi}{v\bar v}-(p-\bar p)\frac{\bar v_x}{\bar v}-p\frac{\hat v_x}{\hat v}.
\end{aligned}
\end{align}
We rewrite \eqref{equ20}$_2$ as
\begin{align}\label{eq74}
\begin{aligned}
\mu(\frac{\hat v_x}{\hat v})_t+\mu(\frac{F}{\bar v})_x-G-\psi_t-R\frac{\zeta_x}{v}+R\frac{\bar\theta_x\phi}{v\bar v}+(p-\bar p)\frac{\bar v_x}{\bar v}+p\frac{\hat v_x}{\hat v}=0.
\end{aligned}
\end{align}
Multiplying \eqref{eq74} by $\frac{\hat v_x}{\hat v}$, and integrating the resulting equation over $(0,t)\times\mathbb{R}$, we have
\begin{align}\label{eq75}
\begin{aligned}
&\int_{\mathbb{R}}\big(\frac{\mu}{2}(\frac{\hat v_x}{\hat v})^2-\psi\frac{\hat v_x}{\hat v}\big)(t)dx+\int_0^t\int_{\mathbb{R}}\frac{R\theta}{v}(\frac{\hat v_x}{\hat v})^2dxds\\
&=\int_{\mathbb{R}}\big(\frac{\mu}{2}(\frac{\hat v_x}{\hat v})^2-\psi\frac{\hat v_x}{\hat v}\big)(0)dx+\int_0^t\int_{\mathbb{R}}\psi_x\big(\frac{\psi_x}{v}-\frac{\bar u_x\phi}{v\bar v}\big)-\psi_x\frac{F}{\bar v}\\
&\quad+\big[G-\mu\big(\frac{F}{\bar v}\big)_x+R\frac{\zeta_x}{v}-R\frac{\bar\theta_x\phi}{v\bar v}-(p-\bar p)\frac{\bar v_x}{\bar v}\big]\big(\frac{\phi_x}{v}-\frac{\bar v_x\phi}{v\bar v}\big)
dxds\\
&\le C(\|\phi_0\|^2_1+\|\psi_0\|^2)-\int_0^t\int_{\mathbb{R}}\frac{G_x\phi}{v}dxds+C\int_0^t\Big\{\|\psi_x\|^2+(1+C_\beta)\|\zeta_x\|^2\\
&\quad+\int_{\mathbb{R}}(1+C_\beta)(\bar v_x^2+\bar\theta_x^2)\|\phi\|_{L^{\infty}}^2+\|\phi\|_{L^\infty}^2(C_\beta G^2+\bar v_x^2)+\bar u_x^2\phi^2
dx\\
&\quad+(\chi+\beta)\|\phi_x\|^2+\int_{\mathbb{R}}(1+\bar v_x^2)F^2+\big(\epsilon_0e^{-2\alpha s}+\delta(1+s)^{-\frac{1}{2}}e^{-\frac{C_2x^2}{1+s}}\big)|\bar v_x\phi|\\
&\quad+F_x^2+ (1+C_\beta)\bar v_x^2\zeta^2dx\Big\}ds\\
&\le C(\|\phi_0\|^2_1+\|\psi_0\|^2)+(C+C_\beta)\int_0^t(\|\psi_x\|^2+\|\zeta_x\|^2)ds+(\chi+C
\beta)
\int_0^t\|\phi_x\|^2ds\\
&\quad+(C+C_\beta)\chi\epsilon_\delta+C(1+C_\beta)\delta\int_0^t\int_\mathbb{R}(1+s)^{-1}e^{-\frac{C_2x^2}{1+s}}\phi^2dxds\\
&\quad +\int_0^t\int_{\mathbb{R}}\big|\frac{G_x\phi}{v}\big|dxds,
\end{aligned}
\end{align}
where $\|( v_{r_ix}^2,u_{r_ix}^2, \theta_{r_ix}^2)\|_{L^1}\le C\delta^{\frac{1}{8}}(1+t)^{-\frac{7}{8}}$ is used in the last inequality.
From \eqref{equ181}, we get
\begin{align}\label{eq76}
\begin{aligned}
G_x&\le \Big\{C\max\{I_1,I_2\}\max\big\{\epsilon_0e^{-2\alpha t}, \delta e^{-c_0(|x|+t)}\big\}+O(\delta)(1+t)^{-2
	}e^{-\frac{C_2x^2}{1+t}}\Big\}\\
	&\quad+\frac{\mu}{\bar v}(|u_{{r_1}xxx}|+|u_{{r_3}xxx}|)
	+C\big(|u_{xx}^{cd}|+|u^{cd}_{xxx}|+|u^{cd}_x|+|v_{xx}^{cd}|+|v_{{r_1}x}|\\
	&\quad+|v_{{r_3}x}|+|v_{{r_1}{xx}}|+|v_{{r_3}{xx}}|+|v_{x}^{cd}|\big)\max\big\{\epsilon_0e^{-2\alpha t}, \delta e^{-c_0(|x|+t)}\big\}\\
	&\quad:=G^1_x+G^2_x+G^3_x,
\end{aligned}
\end{align}
and
\begin{align}\label{equ76}
\begin{aligned}
&\|G_x^1\|\le C\epsilon_\delta e^{-\hat Ct}+C\delta(1+t)^{-\frac{7}{4}},\\
&\|G_x^2\|_{L^1}\le 
C\delta^{\frac{1}{8}}(1+t)^{-\frac{7}{8}},\ \ \|G_x^3\|\le C\epsilon_\delta e^{-\hat Ct}.
\end{aligned}
\end{align}
And there exists
\begin{align}\label{eq77}
\begin{aligned}
\int_0^t\int_{\mathbb{R}}\big|\frac{G_x\phi}{v}\big|dxds
&\le C\int_0^t\Big\{\|(G^1_x,G^3_x)(s)\|\|\phi\|+\|\phi\|^{\frac{1}{2}}\|\phi_x\|^{\frac{1}{2}}\|G^2_x\|_{L^1}\Big\}ds\\
&\le C\chi\epsilon_\delta+\int_0^t\Big\{\beta\|\phi_x\|^2+C_\beta\|\phi\|^{\frac{2}{3}}\|G^2_x\|_{L^1}^{\frac{4}{3}}\Big\}ds\\
&\le\beta\int_0^t\|\phi_x\|^2ds+(C_\beta+C)\chi\epsilon_\delta.
\end{aligned}
\end{align}

Then combining \eqref{eq73}, \eqref{eq75}, \eqref{eq77} with 
 \eqref{eq29}, 
 we deduce \eqref{eq38}
 by choosing $\beta$, $\delta_0$ and $\epsilon_0$ suitably small.
 \end{proof}

 Now 
 we give the a priori estimate about the higher derivatives of $\psi$ and $\zeta$.

\begin{Lem}\label{lemm7}
Assume that $(\phi,\psi,\zeta)$ satisfying 
 \eqref{eq32} with suitably small $\chi$, then for $t\in[0,T]$, we get
\begin{align}\label{eq39}
\begin{aligned}
\|(\psi_x,\zeta_x)(t)\|^2+\int_0^t\|(\psi_{xx},\zeta_{xx})(s)\|^2ds\le C\chi\epsilon_\delta+C\|(\phi_0, \psi_0,\zeta_0)\|_1^2.
\end{aligned}
\end{align}
\end{Lem}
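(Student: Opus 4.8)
The plan is to perform an $L^2$ energy estimate on the first spatial derivatives of $\psi$ and $\zeta$. Concretely, I would multiply \eqref{equ20}$_2$ by $-\psi_{xx}$ and \eqref{equ20}$_3$ by $-\zeta_{xx}$, integrate over $\mathbb{R}$, and integrate by parts in the transport terms so that the time derivatives become $\tfrac12\tfrac{d}{dt}\|\psi_x\|^2$ and $\tfrac{R}{2(\gamma-1)}\tfrac{d}{dt}\|\zeta_x\|^2$. Expanding the dissipative terms $\mu(\tfrac{u_x}{v}-\tfrac{\bar u_x}{\bar v})_x$ and $\kappa(\tfrac{\theta_x}{v}-\tfrac{\bar\theta_x}{\bar v})_x$, the principal contributions are $\mu\int \tfrac{\psi_{xx}^2}{v}\,dx$ and $\kappa\int \tfrac{\zeta_{xx}^2}{v}\,dx$, which supply the good dissipation on the left; all remaining pieces of these expansions carry at least one factor of $\phi$, $\phi_x$, $\bar u_x$ or $\bar v_x$ and are treated as errors. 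Summing and integrating over $[0,t]$ yields an inequality of the form $\|(\psi_x,\zeta_x)(t)\|^2+\int_0^t\|(\psi_{xx},\zeta_{xx})\|^2\,ds \le \|(\psi_{0x},\zeta_{0x})\|^2 + (\text{error terms})$, where $\|(\psi_{0x},\zeta_{0x})\|^2\le C\|(\phi_0,\psi_0,\zeta_0)\|_1^2$ by \eqref{equa20}.

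The errors split into three types. First, the pressure gradient $(p-\bar p)_x$ and the convective term $pu_x-\bar p\bar u_x$, integrated against $\psi_{xx}$ and $\zeta_{xx}$, generate quadratic expressions in $\zeta_x$, $\phi_x$, $\phi$, $\zeta$ times $\psi_{xx}$ or $\zeta_{xx}$; each is handled by Young's inequality, depositing $\beta(\|\psi_{xx}\|^2+\|\zeta_{xx}\|^2)$ into the dissipation and leaving $C_\beta(\|\phi_x\|^2+\|\psi_x\|^2+\|\zeta_x\|^2)$ together with $C_\beta\int_{\mathbb{R}}(\bar v_x^2+\bar\theta_x^2)(\phi^2+\zeta^2)\,dx$. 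Second, the genuinely nonlinear residuals (terms such as $\zeta\,\psi_{xx}^2$, $\zeta\,\zeta_{xx}^2$, and products of first derivatives) are controlled by the Sobolev bound \eqref{eq43} together with the a priori smallness $N(T)<\chi^2$ from \eqref{eq32}, which renders their coefficients $O(\chi)$ and hence absorbable into the dissipation once $\chi$ is small. Third, the ansatz errors $G$ and $H$ contribute $\int_0^t\int_{\mathbb{R}}(|G\psi_{xx}|+|H\zeta_{xx}|)\,dx\,ds$, bounded by $\beta\int_0^t\|(\psi_{xx},\zeta_{xx})\|^2\,ds + C_\beta\int_0^t\|(G,H)\|^2\,ds$; using the pointwise structure in \eqref{equ181}--\eqref{equ182}, Lemma \ref{lemm2} and \eqref{equ6}, the time integral $\int_0^t\|(G,H)\|^2\,ds$ is dominated by a constant multiple of $\chi\epsilon_\delta$.

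To close, I would invoke the lower-order estimates already established: the integrals $\int_0^t(\|\psi_x\|^2+\|\zeta_x\|^2)\,ds$ and the weighted quantity $\int_0^t\int_{\mathbb{R}}(u_{r_1x}+u_{r_3x})(\phi^2+\zeta^2)\,dx\,ds$ are controlled by Lemma \ref{lemm15} (equivalently Lemma \ref{lemm5}), while $\int_0^t\|\phi_x\|^2\,ds$ is controlled by Lemma \ref{lemm6}; the surviving weighted term $\int_0^t\int_{\mathbb{R}}\delta(1+s)^{-1}e^{-C_2x^2/(1+s)}(\phi^2+\zeta^2)\,dx\,ds$ is absorbed through the heat-kernel inequality of Lemma \ref{lemm9} as used in Lemma \ref{lemm11}. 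After fixing $\beta$ small enough to absorb all $\|\psi_{xx}\|^2$ and $\|\zeta_{xx}\|^2$ contributions and then taking $\chi,\delta_0,\epsilon_0$ small, every residual is bounded by $C\chi\epsilon_\delta+C\|(\phi_0,\psi_0,\zeta_0)\|_1^2$, which gives \eqref{eq39}.

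I expect the main obstacle to be the terms coupling the top-order dissipation to $\phi_x$, whose only available control is the time-integrated bound from Lemma \ref{lemm6}: these must be split so that the factor hitting $\psi_{xx}$ or $\zeta_{xx}$ is absorbed by the dissipation while the surviving $\phi_x$-factor stays at the level already estimated, and one must track the precise powers in the source estimates \eqref{eq40} so that each resulting time integral converges with the claimed smallness $\chi\epsilon_\delta$.
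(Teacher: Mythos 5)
Your overall scheme --- testing \eqref{equ20}$_2$ and \eqref{equ20}$_3$ with $-\psi_{xx}$ and $-\zeta_{xx}$, absorbing the top-order terms by Young's inequality, and closing with Lemmas \ref{lemm15}, \ref{lemm11} and \ref{lemm6} --- is exactly the paper's, and your treatment of the pressure and convection terms and of the nonlinear residuals matches the estimates of $\bar J_1$ and $\bar J_2$ in \eqref{eq82} and \eqref{eq83}. The gap is in your treatment of the ansatz sources. You bound $\int_0^t\int_{\mathbb{R}}\big(|G\psi_{xx}|+|H\zeta_{xx}|\big)dxds$ by $\beta\int_0^t\|(\psi_{xx},\zeta_{xx})\|^2ds+C_\beta\int_0^t\|(G,H)\|^2ds$ and assert that $\int_0^t\|(G,H)\|^2ds\le C\chi\epsilon_\delta$. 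That assertion cannot be justified: $G$ and $H$ are built entirely from the ansatz and the wave profiles and contain no factor of the solution, so there is no mechanism to produce the factor $\chi=\sup_t\|(\phi,\psi,\zeta)\|_1$. What \eqref{equ181}, \eqref{equ182}, Lemma \ref{lemm2} and \eqref{equ6} actually give is a bound of the form $\int_0^t\|(G,H)\|^2ds\le C\delta^{a}+C\epsilon_\delta^{2}$ for some positive power $a$ (the worst contributions coming from $\|u_{r_ixx}\|^2$ and $\|(u_{r_ix})^2\|^2$). A standalone term $C_\beta\delta^{a}$ on the right of \eqref{eq39} is not of the claimed form $C\chi\epsilon_\delta+C\|(\phi_0,\psi_0,\zeta_0)\|_1^2$, and when fed into Proposition \ref{pro15} it would force the extraneous constraint $\delta^{a}\lesssim\chi^{2}$ in order to close the bootstrap.

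The paper avoids this by first integrating by parts, so that the sources enter as $\bar J_3=-\int_0^t\int_{\mathbb{R}}(G_x\psi_{x}+H_x\zeta_{x})dxds$ in \eqref{eq81}, and then decomposing $G_x=G_x^1+G_x^2+G_x^3$ and $H_x=H_x^1+H_x^2+H_x^3$ as in \eqref{eq76} and \eqref{eq84}. The pieces with good $L^2$ decay in time are paired with $\|\psi_x\|,\|\zeta_x\|\le\chi$; the rarefaction pieces $G_x^2,H_x^2$, for which the available bound is only $\|G_x^2\|_{L^1}\le C\delta^{1/8}(1+t)^{-7/8}$ (whose pairing with $\|\psi_x\|_{L^2}$ via the $L^2$ norm would be logarithmically divergent in time), are handled by $\int_{\mathbb{R}}|G_x^2\psi_x|dx\le\|\psi_x\|_{L^\infty}\|G_x^2\|_{L^1}\le\|\psi_x\|^{1/2}\|\psi_{xx}\|^{1/2}\|G_x^2\|_{L^1}$ followed by Young's inequality, which yields $\beta\|\psi_{xx}\|^2+C_\beta\|\psi_x\|^{2/3}\|G_x^2\|_{L^1}^{4/3}$: the exponent $4/3$ upgrades $(1+t)^{-7/8}$ to the time-integrable $(1+t)^{-7/6}$, and the factor $\|\psi_x\|^{2/3}$ supplies the smallness in $\chi$. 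This integration by parts combined with the $L^1$--$L^\infty$ interpolation is the ingredient your argument is missing; pairing the sources directly against the top-order derivatives forecloses both effects, since $\psi_{xx}$ and $\zeta_{xx}$ cannot themselves be placed in $L^\infty$.
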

\begin{proof}
Taking \eqref{equ20}$_2\times(-\psi_{xx})+$\eqref{equ20}$_3\times(-\zeta_{xx})$, we have
\begin{align}\label{eq80}
\begin{aligned}
\frac{1}{2}&\big\{\psi_x^2+\frac{R}{\gamma-1}\zeta_x^2\big\}_t-\big\{\psi_t\psi_x+\frac{R}{\gamma-1}\zeta_t\zeta_x\big\}_x+\mu\frac{\psi_{xx}^2}{v}+\kappa\frac{\zeta_{xx}^2}{v}\\
&=\Big\{(p-\bar p)_x+\mu\frac{v_x\psi_x}{v^2}-\mu\big[\bar u_x(\frac{1}{v}-\frac{1}{\bar v})\big]_x+G\Big\}\psi_{xx}+\Big\{(pu_x-\bar p\bar u_x)\\
&\qquad+\kappa\frac{v_x\zeta_x}{v^2}-\kappa\big[\bar\theta_x(\frac{1}{v}-\frac{1}{\bar v})\big]_x-\mu(\frac{u_x^2}{v}-\frac{\bar u_x^2}{\bar v})+H\Big\}\zeta_{xx}.
\end{aligned}
\end{align}
Integrating \eqref{eq80} over $\mathbb{R}\times(0, t)$, we get
\begin{align}\label{eq81}
\begin{aligned}
&\frac{1}{2}\int_{\mathbb{R}}(\psi_x^2+\frac{R}{\gamma-1}\zeta_x^2)(t)dx+\int_0^t\int_{\mathbb{R}}\mu\frac{\psi_{xx}^2}{v}+\kappa\frac{\zeta_{xx}^2}{v}dxds\\
&=\frac{1}{2} \int_{\mathbb{R}}(\psi_x^2+\zeta_x^2)(0)dx+\int_0^t\int_{\mathbb{R}}\Big\{\big[(p-\bar p)_x+\mu\frac{v_x\psi_x}{v^2}+\mu\big(\frac{\bar u_x\phi}{v\bar v}
\big)_x \big]\psi_{xx}\\
&\quad +\big[(pu_x-\bar p\bar u_x)+
\kappa\frac{v_x\zeta_x}{v^2}+
\kappa\big(\frac{\bar \theta_x\phi}{v\bar v}\big)_x
+\mu(\frac{\phi\bar u_x^2}{v}-\frac{2\bar u_x^2\psi_x+\psi_x^2}{\bar v})\big]\zeta_{xx}\\
&\quad
-(G_x\psi_{x}+H_x\zeta_{x})\Big\}dxds\\
&:=\frac{1}{2}\int_{\mathbb{R}}(\psi_x^2+\zeta_x^2)(0)dx+\sum_{i=1}^{i=3}\bar J_i.
\end{aligned}
\end{align}
From the Lemma \ref{lemm15}, Lemma \ref{lemm11} and Lemma \ref{lemm6}, one can obtain
\begin{align}\label{eq82}
\begin{aligned}
\bar J_1&\le \int_0^t\Big\{C_\beta(\|\phi_x\|^2+\|\zeta_x\|^2)+\beta\|\psi_{xx}\|^2+C_\beta\int_{\mathbb{R}}(\bar v_x^2+\bar\theta_x^2)(\phi^2+\zeta^2)dx\\
&\quad+C_\beta\|\zeta\|^2_{L^\infty}\int_{\mathbb{R}}\phi_x^2dx+C_\beta\int_{\mathbb{R}}(\bar u_{xx}^2+\bar u_x^2+\bar u_x^2\bar v_x^2)\phi^2dx+\|\bar v_x\|_{L^\infty}\|\psi_x\|\|\psi_{xx}\|\\
&\quad+\|\bar u_x\|_{L^{\infty}}\|\phi_x\|\|\psi_{xx}\|+\|\psi_x\|_{L^{\infty}}\|\phi_x\|\|\psi_{xx}\|\Big\}ds\\
&\le C_\beta\int_0^t\|(\phi_x,\zeta_x)\|^2ds+\beta\int_0^t\|\psi_{xx}\|^2ds+C_\beta\int_0^t(\epsilon_\delta
e^{-\hat Cs}+\delta)\|(\phi_x,\psi_x)\|^2ds\\
&\quad+C_\beta\sup_{t\in[0,T]}\|\phi_x(t)\|^4\int_0^t\|\psi_x\|^2ds+C_\beta\chi\epsilon_\delta\\
&\quad+C_\beta\delta\int_0^t\int_{\mathbb{R}}(1+s)^{-1}e^{-\frac{C_2x^2}{1+s}}(\phi^2+\zeta^2)dxds\\
&\le C_\beta\chi\epsilon_\delta+C_\beta
\big(\|(\phi_0,\psi_0, \zeta_0)\|^2+\|\phi_0\|_1^2\big)+\beta\int_0^t\|\psi_{xx}\|^2ds,
\end{aligned}
\end{align}
\begin{align}\label{eq83}
\begin{aligned}
\bar J_2&\le \int_0^t\Big\{\beta\|\zeta_{xx}\|^2+C_\beta\|\psi_x\|^2+C_\beta\int_{\mathbb{R}}\bar u_x^2(\phi^2+\zeta^2+\psi_x^2)+(\bar\theta_{xx}^2+\bar v_x^2\bar\theta_x^2)\phi^2dx\\
&\quad+C\|\bar v_x\|_{L^\infty}\|\zeta_x\|\|\zeta_{xx}\|+C\big(\|\bar\theta_x\|_{L^\infty}+\|\zeta_x\|_{L^{\infty}}+\|\bar\theta_x\|_{L^\infty}\|\phi\|_{L^{\infty}}\big)\|\phi_x\|\|\zeta_{xx}\|\\
&\quad +C\|\psi_x\|_{L^\infty}\|\psi_x\|\|\zeta_{xx}\|\Big\}ds\\
&\le \int_0^t\Big\{\beta\|\zeta_{xx}\|^2+C_\beta\big[(1+\epsilon_\delta)\|\psi_x\|^2+(\chi+\delta)(\|\phi_x\|^2+\|\zeta_x\|^2)\big]\Big\}ds\\
&\quad+C_\beta\chi\epsilon_\delta+C_\beta\sup_{t\in[0,T]}\|\phi_x(t)\|^4\int_0^t\|\zeta_x\|^2ds\\
&\le  C_\beta\chi\epsilon_\delta+C_\beta\big(\|(\phi_0,\psi_0, \zeta_0)\|^2+\|\phi_0\|_1^2\big)+\beta\int_0^t\|\zeta_{xx}\|^2ds,
\end{aligned}
\end{align}
From \eqref{equ182}, we get
\begin{align}\label{eq84}
\begin{aligned}
H_x&\le \Big\{C\max\{I_1,I_2\}\max\big\{\epsilon_0e^{-2\alpha t}, \delta e^{-c_0(|x|+t)}\big\}+O(\delta)(1+t)^{-\frac{5}{2}}e^{-\frac{C_2x^2}{1+t}}\Big\}\\
	&\quad+\frac{\kappa}{\bar v}(|\theta_{{r_1}xxx}|+|\theta_{{r_3}xxx}|)
	+C\big[|u_{xx}^{cd}|+|u_{{r_1}xx}|+|u_{{r_3}xx}|+|\theta_{xx}^{cd}|+|\theta_{{r_1}xx}|\\
	&\quad+|\theta_{{r_3}xx}|+|\theta_{xxx}^{cd}|+|u_{x}^{cd}|+u_{{r_1}x}+u_{{r_3}x}+|\theta_x^{cd}|+|\theta_{{r_1}x}|+|\theta_{{r_3}x}|\\
	&\quad+|\theta_{xx}^{cd}|
	\big]\max\big\{\epsilon_0e^{-2\alpha t}, \delta e^{-c_0(|x|+t)}\big\}\\
	&\quad:=H_x^1+H_x^2+H_x^3,
\end{aligned}
\end{align}
and
\begin{align}\label{equ84}
\|H_x^2\|_{L^1}\le 
C\delta^{\frac{1}{8}}(1+t)^{-\frac{7}{8}},\ \ \|H_x^3\|\le C\epsilon_\delta e^{-\hat Ct}.
\end{align}
Then
\begin{align}\label{eq85}
\begin{aligned}
\bar J_3
&\le 
C\int_0^t\Big\{\|(G^1_x,G^3_x)(s)\|\|\psi_x\|+\|\psi_x\|^{\frac{1}{2}}\|\psi_{xx}\|^{\frac{1}{2}}\|G^2_x\|_{L^1}+\|(H^1_x,H^3_x)(s)\|\|\zeta_x\|\\
&\quad+\|\zeta_x\|^{\frac{1}{2}}\|\zeta_{xx}\|^{\frac{1}{2}}\|H^2_x\|_{L^1}\Big\}ds\\
&\le C\chi\epsilon_\delta+ \int_0^t\Big\{\beta\big(\|\psi_{xx}\|^2+\|\zeta_{xx}\|^2\big)+C_\beta\big(\|\psi_{x}\|^{\frac{2}{3}}\|G^2_x\|_{L^1}^{\frac{4}{3}}+\|\zeta_{x}\|^{\frac{2}{3}}\|H^2_x\|_{L^1}^{\frac{4}{3}}\big)\Big\}ds\\
&\le \int_0^t\beta\big(\|\psi_{xx}\|^2+\|\zeta_{xx}\|^2\big)ds+(C_\beta+C)\chi\epsilon_\delta.
\end{aligned}
\end{align}
Then choosing $\beta$ suitably small, we derive directly Lemma \ref{lemm7} 
from \eqref{eq82}-\eqref{eq85}. 
\end{proof}

After proving the Proposition $\ref{pro15}$, we can extend the unique local solution $(\phi,\psi,\zeta)(x,t)$ in Proposition $\ref{pro151}$
to $T=\infty$.
By Proposition $\ref{pro15}$ and equations \eqref{equ20}, we have
$$\int_0^\infty\Big(\|(\phi_x,\psi_x,\zeta_x)\|^2+\big|\frac{d}{dt}\|(\phi_x,\psi_x,\zeta_x)(t)\|^2\big|\Big)dt<\infty,$$ therefore,
$$ \lim_{t\to +\infty} \|(\phi_x,\psi_x,\zeta_x)\|^2=0,$$
which together with \eqref{eq91} and Sobolev inequality imply \eqref{eq030}.
Thus, 
 the proof of the main Theorem \ref{thm1} is completed.


\vspace{1.5cm}

\end{document}